\newtheorem{theorem}{Theorem}
\newtheorem{proposition}[theorem]{Proposition} 
\newtheorem{remark}{Remark}
\newtheorem{definition}{Definition}
\newtheorem{lemma}{Lemma}
\newtheoremstyle{custom}
  {3pt}   
  {3pt}   
  {}      
  {0pt}   
  {\bfseries} 
  {}      
  {.5em}  
  {\thmname{#1}~\thmnumber{#2}} 
\theoremstyle{custom}
\newtheorem{assumptionInternal}{\hspace{-0.4em}}  
\newenvironment{assumption}[1][]{%
  \begin{assumptionInternal}[#1]%
}{%
  \end{assumptionInternal}%
}
\newcommand{\inter}[1]{\llbracket #1 \rrbracket}
\newcommand{\MC}[1]{\mathcal{#1}}
\numberwithin{equation}{section}
\numberwithin{theorem}{section}
\numberwithin{remark}{section}
\numberwithin{definition}{section}
\numberwithin{lemma}{section}
\title[]{A Multi-Objective Optimization framework for Decentralized Learning with coordination constraints}
\author{Roberto Morales\textsuperscript{\,$\ast$}}  
\address{\textsuperscript{$\ast$}\, Chair of Computational Mathematics, DeustoTech, University of Deusto, Avenida de las Universidades 24, 48007 Bilbao, Basque Country, Spain} 
\email{roberto.morales@deusto.es}
\thanks{This project has received funding from the European Research Council (ERC) under the European Union’s Horizon 2030 research and innovation programme (grant agreement NO: 101096251-CoDeFeL). U. Biccari has been supported by the Grants TED2021-131390B-I00/AEI/10.13039/501100011033 DasEl and PID2023-146872OB-I00-DyCMaMod of MINECO, Spanish Government} 
\author{Umberto Biccari\textsuperscript{\,$\ast$}}  
\email{umberto.biccari@deusto.es}
\keywords{Multi-Objective optimization, decentralized learning, Pareto optimal solutions, Classification problems}
\subjclass[2020]{58E17, 90C15, 90C23, 90C26, 93A14}
\begin{document}

\begin{abstract}
This article introduces a generalized framework for Decentralized Learning formulated as a Multi-Objective Optimization problem, in which both distributed agents and a central coordinator contribute independent, potentially conflicting objectives over a shared model parameter space. Unlike traditional approaches that merge local losses under a common goal, our formulation explicitly incorporates coordinator-side criteria, enabling more flexible and structured training dynamics. To navigate the resulting trade-offs, we explore scalarization strategies—particularly weighted sums—to construct tractable surrogate problems. These yield solutions that are provably Pareto optimal under standard convexity and smoothness assumptions, while embedding global preferences directly into local updates. We propose a decentralized optimization algorithm with convergence guarantees, and demonstrate its empirical performance through simulations, highlighting the impact of the coordinator’s influence on local agent behavior. The proposed approach offers a principled and customizable strategy for balancing personalization, fairness, and coordination in decentralized learning systems.
\end{abstract} 

\maketitle

\section{Introduction}

The increasing complexity of large-scale systems — spanning networks of sensors, autonomous agents, or collaborative decision-making units — has brought renewed attention to Decentralized Learning (DL) and optimization. In such settings, multiple agents interact to solve a shared task while maintaining autonomy over their own data, resources, or objectives. This distributed architecture arises naturally in scenarios where central coordination is costly, impractical, or undesired, motivating algorithmic strategies that are scalable, robust, and aligned with local constraints. \cite{mcmahan2017communication,kairouz2021advances,li2020federated}. Beyond technical innovation, this transition also responds to rising societal demands for AI models that are both effective and ethically responsible \cite{floridi2018ai4people,raji2022actionable}.  

A central challenge in decentralized systems lies in reconciling the heterogeneous objectives of individual agents. Each participant may optimize task-specific goals informed for instance by local structural constraints, which need not align with a shared global interest. In many applications, this tension is further compounded by the presence of coordinating principles — such as fairness, structural regularization, or global consistency — that must be enforced without undermining agent autonomy \cite{li2020federated,li2019fair,reddi2020adaptive,yang2019federated}. While existing decentralized approaches often rely on aggregating local losses or gradients under the assumption of a common objective, they typically overlook the hierarchical and heterogeneous structure of real-world systems. In practice, a central authority — such as a regulatory body, infrastructure manager, or institutional coordinator — may impose distinct global objectives that cannot be adequately captured through simple averaging. This gap motivates a more general formulation, in which both agents and a central coordinator contribute independent (and potentially conflicting) objectives to a shared optimization problem. This setting has led to a growing body of work that extends decentralized paradigms along dimensions such as scalability, fairness, and collaborative robustness. These developments are comprehensively reviewed in recent surveys \cite{alsharif2024contemporary, liu2024recent, kairouz2021advances}, which also underscore open challenges in aligning agent-level and system-wide goals. A key insight emerging from this literature is that decentralized training often involves competing or incompatible objectives, making Multi-Objective Optimization (MOO) a natural and principled framework to formalize and analyze such trade-offs \cite{miettinen1999nonlinear,boyd2004convex}.

Rather than collapsing all objectives into a single aggregate loss, MOO allows for the explicit representation of conflicts and synergies among stakeholders, enabling more interpretable and controllable learning dynamics. In these settings, a single solution that simultaneously minimizes all objectives is generally unattainable. Instead, one seeks for so-called Pareto optimal solutions — those for which no objective can be improved without deteriorating at least one other. This notion captures the essential structure of DL problems where agent-level interests and global coordination goals may be at odds.
Far from being a limitation, the existence of a Pareto front reflects the richness of the problem structure, allowing decision-makers or coordinating entities to navigate among valid trade-offs depending on context, priorities, or policy constraints.

However, this richness also introduces computational and conceptual challenges, as the goal becomes not to find a single optimum, but to explore or select among a continuum of efficient solutions. To address this, a common approach is to apply scalarization techniques, which convert vector-valued objectives into scalar-valued surrogate problems. Among the most widely used are weighted sums and Chebyshev norms, which allow for the approximation of the Pareto front while leveraging classical optimization tools \cite{ma2020efficient, khorram2014numerical, jahn2009vector, miettinen1999nonlinear}. These methods enable controlled exploration of the trade-offs inherent in multi-agent systems and allow practitioners to embed coordinator-side priorities directly into the optimization process. The result is a flexible, computationally efficient strategy that aligns DL dynamics with structural or policy-driven objectives, while maintaining theoretical guarantees under standard convexity assumptions.

Beyond optimization, the interactions between decentralized agents often exhibit inherently strategic characteristics, where individual behaviors affect — and are affected by — the choices of others. This has motivated the use of game-theoretic formulations to model decentralized learning as a multi-agent system of incentives and feedback loops \cite{liu2024game, rosen1965existence, ratliff2016characterization}. In particular, potential games provide a compelling theoretical lens, revealing conditions under which equilibrium states exist and can be efficiently reached by distributed algorithms \cite{marden2009cooperative}. This perspective reinforces the view of decentralized collaborative learning as not only an optimization problem, but also a structured interaction among agents with both cooperative and competitive elements.

Building on these ideas, in this work we propose a generalized framework for DL with explicit multi-objective formulation: individual agents optimize local empirical objectives, while a central coordinator can impose additional structural criteria or regularization on the shared model parameter space. This naturally leads to scalarized surrogate problems that blend local and global objectives through a tunable parameter. Our approach extends classical decentralized averaging schemes by embedding coordinator-side preferences directly into local training through scalarization. We provide theoretical convergence guarantees and Pareto optimality under smoothness and convexity assumptions, echoing ideas from control-theoretic interpretations of deep learning \cite{agrachev2020control, geshkovski2022turnpike, trelat2025turnpike}.

Finally, through experiments on benchmark datasets, we demonstrate the empirical impact of the scalarization parameter on the trade-off between local accuracy and global coordination. Our results highlight that the proposed method offers a flexible and principled strategy to balance personalization, fairness, and robustness in decentralized learning environments.

\subsection{Outline}
This paper is organized as follows. In Section \ref{section:math} we present the mathematical formulation of our MOO problem and we introduce a scalarization-based formulation that reduces the MOO problem to a weighted single-objective one. We also propose a generalized decentralized optimization algorithm to solve the scalarized problem, together with a couple of convergence theorems. Section \ref{section:numerical:experiments} provides some numerical experiments on the MNIST dataset to evaluate the performance of the proposed method, and to assess the impact of the coordinator's objectives in the learning task. In Section \ref{section:Conclusions}, we summarize our conclusions and discuss future directions. Finally, Appendix \ref{section:proof:of:the:main:result} contains the detailed proof of the main theoretical results. 

\section{Mathematical formulation and main theoretical results}\label{section:math}

Let $M\in \mathbb{N}$ denote the number of agents contributing to the DL task. Each agent $i\in \inter{M}:=\{1,\ldots, M\}$ has a dataset 
\begin{align*}
    D^i:=\Big\{(x_l^i,y_l^i)\in \mathbb{R}^{d_1} \times \mathbb{R}^{d_2}\Big\}_{l=1}^{n_i},
\end{align*}
over which it trains a ML model $f_{\Theta}: \mathbb{R}^{d_1} \to \mathbb{R}^{d_2}$, where $\Theta\in \mathcal{U}$ denotes the (shared) learnable parameters belonging to a parameter's space $\mathcal{U}$. The training objective of agent $i$ is given by a differentiable empirical risk function $C_i :\mathcal{U} \to \mathbb{R}$. Moreover, for simplicity, we will assume that 
\begin{align*}
    n_1 = n_2 = \ldots = n_M,    
\end{align*}
i.e. each agent has the same amount of data.

Together with the agent-specific objectives $\{C_i\}_{i=1}^M$, we assume the presence of a central coordinator who contributes with a set of $N\in \mathbb{N}$ additional criteria, represented by continuous functions $S_j: \mathcal{U} \to \mathbb{R}$, for $j\in\inter{N}$. These objectives may reflect regularization terms, structural preferences, or constraints to guide global coordination, and are introduced to facilitate each agent's training task. 

In the context just described, we can formulate the learning task as a MOO problem in which all local and global objectives are considered simultaneously: 
\begin{align}\label{Multi-objective:problem}
    \min_{\Theta\in \mathcal{U}} \Big(C_1(\Theta),\ldots, C_M(\Theta) , S_1(\Theta), \ldots, S_N(\Theta) \Big).
\end{align}

Notice that, in general, there does not exist a single solution $\Theta^* \in \mathcal{U}$ that simultaneously minimizes all the objective functions in \eqref{Multi-objective:problem}. This reflects the inherent trade-offs and conflicts among competing goals, which make it impossible to perfectly satisfy every criterion at once. As a result, solutions must balance these objectives, motivating the need for principled MOO strategies. This motivates the introduction of the concept of Pareto optimality, which formalizes how solutions can achieve this balance when a single global minimum is unattainable.

\begin{definition}
Given the MMO problem \eqref{Multi-objective:problem}, we say that $\Theta^*\in \mathcal{U}$ is a
\begin{itemize}
    \item[1.] {\bf Pareto optimal solution} if it for all $\Theta\in \mathcal{U}$ with $\Theta\neq\Theta^*$ it holds that 
    \begin{displaymath}
        \begin{array}{ll}
            C_i(\Theta)\geq C_i(\Theta^*), & \text{ for all } i\in\inter{M}
            \\
            S_j(\Theta)\geq S_j(\Theta^*), & \text{ for all } j\in\inter{N}
        \end{array}, 
    \end{displaymath}
    with at least one strict inequality, that is either $C_{i_0}(\Theta) > C_{i_0}(\Theta^*)$ for some $i_0\in\inter{M}$ or $S_{j_0}(\Theta) > S_{j_0}(\Theta^*)$ for some $j_0\in\inter{N}$.
    
    \item[2.] {\bf Locally Pareto optimal solution} if there exists $\delta>0$ such that $\Theta^*$ is Pareto optimal in $\mathcal{U} \cap B(\Theta^*,\delta)$, where $B$ refers to the open ball using the euclidean norm.

    \item[3.] {\bf (Locally) weakly Pareto optimal solution} if there exists $\delta > 0$ such that for all $\Theta\in \mathcal{U}\cap B(\Theta^*, \delta)$ with $\Theta\neq\Theta^*$ it holds that 
    \begin{displaymath}
        \begin{array}{ll}
            C_i(\Theta)> C_i(\Theta^*), & \text{ for all } i\in\inter{M}
            \\
            S_j(\Theta)> S_j(\Theta^*), & \text{ for all } j\in\inter{N}
        \end{array}, 
    \end{displaymath}
    In other words, no other feasible point strictly improves all objectives simultaneously within a neighborhood of $\Theta^*$.
\end{itemize}      
\end{definition}

The existence of Pareto optimal solutions to \eqref{Multi-objective:problem} is a direct consequence of Weierstrass's theorem and can be guaranteed under mild assumptions, namely that $\mathcal{U}$ is a compact set and that the objective functions $\{C_i\}_{i=1}^M$ and $\{S_j\}_{J=1}^N$ are continuous. Moreover, we have the following result whose proof can be found in \cite{jahn2009vector,miettinen1999nonlinear}.
\begin{proposition}
Suppose that $\mathcal{U}$ is convex.
\begin{itemize}
    \item[1.] If both $C_i$ and $S_j$ are convex, for all $i\in \inter{M}$ and for all $j\in \inter{N}$, then every locally Pareto optimal solution is also Globally Pareto optimal.
    \item[2.] If both $C_i$ and $S_j$ are quasi-convex functions, with at least one strictly quasi-convex function \footnote{We recall that, given a convex set $\mathcal{U}$, a function $f:\mathcal{U}\to\mathbb{R}$ is said to be \textbf{quasi}-\textbf{convex} if for every $\alpha \in \mathbb{R}$ the sublevel set $\{x\in \mathcal{U}: f(x) \leq \alpha\}$ is convex. This is equivalent to the inequality 
    \begin{align*}
        f\big(\lambda x + (1-\lambda) y\big) \leq \max\{f(x), f(y)\}, \quad \text{ for all } x,y \in\mathcal{U} \text{ and } \lambda \in [0,1].
    \end{align*} 
    If the strict inequality holds, then the function is said to be \textbf{strictly quasi}-\textbf{convex}.}, then every locally Pareto optimal solution is also globally Pareto optimal. 
\end{itemize}
\end{proposition}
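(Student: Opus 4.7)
The plan is to prove both parts by contradiction, leveraging convexity of $\mathcal{U}$ to move a hypothetical global dominator into an arbitrarily small neighborhood of $\Theta^*$ via convex combinations. First I would fix a locally Pareto optimal point $\Theta^*$, with associated radius $\delta>0$, and assume that it fails to be globally Pareto optimal. This means there exists $\widetilde\Theta\in\mathcal{U}$, $\widetilde\Theta\neq\Theta^*$, such that $C_i(\widetilde\Theta)\leq C_i(\Theta^*)$ and $S_j(\widetilde\Theta)\leq S_j(\Theta^*)$ for every $i\in\inter{M}$ and $j\in\inter{N}$, with at least one strict inequality.

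The key construction is the segment $\Theta_\lambda := \lambda \widetilde\Theta + (1-\lambda)\Theta^*$ for $\lambda\in(0,1)$. Convexity of $\mathcal{U}$ ensures $\Theta_\lambda\in\mathcal{U}$, and since $\|\Theta_\lambda - \Theta^*\| = \lambda\|\widetilde\Theta - \Theta^*\|$, choosing $\lambda>0$ small enough places $\Theta_\lambda$ inside $B(\Theta^*,\delta)$. The goal is then to show that $\Theta_\lambda$ Pareto-dominates $\Theta^*$, which contradicts local Pareto optimality.

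For Part 1, convexity of each objective yields $C_i(\Theta_\lambda)\leq \lambda C_i(\widetilde\Theta) + (1-\lambda)C_i(\Theta^*) \leq C_i(\Theta^*)$, and an analogous bound for every $S_j$. For the index where the original inequality is strict—say $C_{i_0}(\widetilde\Theta) < C_{i_0}(\Theta^*)$—the first estimate becomes strict, delivering the required strict domination at $\Theta_\lambda$. For Part 2, I would apply the max characterization of quasi-convexity to obtain $C_i(\Theta_\lambda)\leq \max\{C_i(\widetilde\Theta), C_i(\Theta^*)\} = C_i(\Theta^*)$, and similarly for each $S_j$. The strict inequality then comes for free from the strictly quasi-convex objective, say $C_{i_0}$: since $\widetilde\Theta\neq\Theta^*$ and $\lambda\in(0,1)$, strict quasi-convexity forces $C_{i_0}(\Theta_\lambda) < C_{i_0}(\Theta^*)$.

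The only mild subtlety—hardly an obstacle—is that in the quasi-convex case the strict inequality furnished by strict quasi-convexity need not concern the same objective for which the original assumption provided a strict gap; but this is precisely why strict quasi-convexity of \emph{some} objective is assumed, and the automatic strict inequality at $\Theta_\lambda$ suffices to complete the contradiction without having to track which component of the vector objective carried the initial strict improvement.
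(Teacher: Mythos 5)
Your argument is correct and is precisely the standard segment/convex-combination proof: the paper itself does not prove this proposition but defers to the cited references (Jahn, Miettinen), where the argument is exactly the one you give — move a hypothetical dominating point $\widetilde\Theta$ along $\lambda\widetilde\Theta+(1-\lambda)\Theta^*$ into $B(\Theta^*,\delta)$ and use (quasi-)convexity to preserve domination, with the strictly quasi-convex objective supplying the strict inequality in Part 2. The only caveat is definitional rather than mathematical: you negate Pareto optimality in the standard "no dominating point" sense, whereas the paper's Definition 2.1 is written with reversed inequalities (a strict-dominance form), but your constructed $\Theta_\lambda$ contradicts local optimality under either reading.
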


At the same time, it is well-known that, in general, Pareto optimal solutions are not unique (see \cite[Example 11.4]{jahn2009vector}). Because multiple trade-offs between competing objectives can exist, there is typically an entire set, known as the Pareto front, of solutions that cannot be improved in one objective without sacrificing performance in at least one other. This non-uniqueness is a defining feature of multi-objective problems and motivates the use of scalarization or preference-based methods to select among the possible efficient solutions.

\subsection{Scalarized optimization in a decentralized setting}\label{section:main:results}

Due to the non-uniqueness of solutions and the presence of Pareto fronts, solving a MOO problem is inherently challenging. In most cases, it is practically impossible to compute the entire set of optimal solutions $\Theta^*$ of \eqref{Multi-objective:problem}. To address these challenges, the optimal control literature has introduced the \textbf{scalarization-based approach}, which enables the computation of at least a subset of the optimal solutions to \eqref{Multi-objective:problem}.

The starting point for formalizing this scalarization-based approach is the following optimization problem, that merges agents and coordinator's objectives into a single surrogate loss:
\begin{align}\label{weighted:sum:problem}
    \min_{\Theta \in \mathcal{U}} \left\{\frac{1-\lambda}{M} \sum_{i=1}^M C_i(\Theta) + \frac{\lambda}{N}\sum_{j=1}^{N} S_j(\Theta) \right\}.
\end{align}

Here, the parameter $\lambda \in [0,1)$ serves as a weight that modulates the balance between local agent objectives and the coordinator's preferences. When $\lambda=0$, the influence of the coordinator vanishes and agents minimize their individual objectives independently. Conversely, as $\lambda \to 1^-$, the coordinator's influence dominates, effectively centralizing decision-making. This formulation enables a tunable degree of personalization, echoing recent developments in personalized learning \cite{kurilovas2019advanced,tan2022towards}, but framed here within a rigorous MOO context.

Problem \eqref{weighted:sum:problem} is known in the optimal control literature as the \textbf{weighting scalarization} of \eqref{Multi-objective:problem}. It allows to simplify the computation of Pareto-like solutions to \eqref{Multi-objective:problem}. In fact, we have the following result whose proof can be found in \cite{miettinen1999nonlinear}. 
\begin{proposition}
Suppose that $C_i$ and $S_j$ are continuous, for all $i\in \inter{M}$ and $j\in \inter{N}$. 
\begin{itemize}
    \item[1.] A solution of the weighting problem \eqref{weighted:sum:problem} is weakly-Pareto optimal. 
    \item[2.] Suppose that the problem \eqref{weighted:sum:problem} has a unique solution. Then, such a solution is Pareto optimal.
    \item[3.] Suppose that $C_i$ and $S_j$ are convex, for all $i\in \inter{M}$ and $j\in \inter{N}$. If $\Theta^*\in \mathcal{U}$ is a Pareto optimal solution of \eqref{Multi-objective:problem}, then there exists $\Theta^*$ is also a solution of \eqref{weighted:sum:problem}.
\end{itemize}
\end{proposition}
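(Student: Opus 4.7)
The plan is to treat the three claims separately, each by an argument that is standard in the scalarization literature.

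For claim~1, I would argue by contrapositive. Suppose $\Theta^*$ solves \eqref{weighted:sum:problem} but is not weakly Pareto optimal. Then there exists $\Theta\in\MC{U}$, $\Theta\neq\Theta^*$, with $C_i(\Theta)<C_i(\Theta^*)$ for every $i\in\inter{M}$ and $S_j(\Theta)<S_j(\Theta^*)$ for every $j\in\inter{N}$. Since $\lambda\in[0,1)$ yields $(1-\lambda)/M>0$ and $\lambda/N\geq 0$, multiplying these strict inequalities by the corresponding weights and summing delivers a strictly smaller value of the scalarized objective at $\Theta$, contradicting the minimality of $\Theta^*$.

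For claim~2, I would proceed along the same lines. Assume that $\Theta^*$ is the unique minimizer of \eqref{weighted:sum:problem} but is not Pareto optimal. Then some $\Theta\neq\Theta^*$ satisfies $C_i(\Theta)\leq C_i(\Theta^*)$ and $S_j(\Theta)\leq S_j(\Theta^*)$ for every $i,j$. Non-negativity of both scalarization weights gives a scalarized-objective value at $\Theta$ no larger than that at $\Theta^*$; combined with $\Theta^*$ being a minimizer, this forces equality and hence $\Theta$ is also a minimizer, contradicting uniqueness. Observe that the strict-improvement requirement in the Pareto definition is not even needed here.

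For claim~3, the route is a standard convex-separation argument. Collect the objectives into the vector-valued map $F:\MC{U}\to\mathbb{R}^{M+N}$ given by $F(\Theta)=(C_1(\Theta),\ldots,C_M(\Theta),S_1(\Theta),\ldots,S_N(\Theta))$. Under convexity of every component, the ``upper shadow'' $A=\{F(\Theta)-F(\Theta^*)+z:\Theta\in\MC{U},\ z\in\mathbb{R}^{M+N}_{\geq 0}\}$ is a convex subset of $\mathbb{R}^{M+N}$, and Pareto optimality of $\Theta^*$ prevents $A$ from intersecting the open negative orthant. A separating hyperplane then produces non-negative multipliers $(\alpha_i)_i,(\beta_j)_j$, not all zero, such that $\Theta^*$ minimizes $\sum_i\alpha_i C_i+\sum_j\beta_j S_j$ over $\MC{U}$. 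Matching these multipliers with the parameterization in \eqref{weighted:sum:problem} yields the required $\lambda\in[0,1)$.

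The main obstacle lies in claim~3: the separation step delivers arbitrary non-negative weights $(\alpha_i,\beta_j)$, whereas \eqref{weighted:sum:problem} is restricted to the highly symmetric family in which all agent weights coincide with $(1-\lambda)/M$ and all coordinator weights coincide with $\lambda/N$. Showing that the separating multipliers can always be taken within this specific family is not automatic and, in general, requires either additional symmetry of the objectives or the weaker reading—already sufficient for practical purposes and in line with the classical formulation of \cite{miettinen1999nonlinear} quoted by the authors—that $\Theta^*$ minimizes \emph{some} non-negatively weighted combination of the $C_i$ and $S_j$.
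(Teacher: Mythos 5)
The paper gives no proof of this proposition, deferring to \cite{miettinen1999nonlinear}, so there is no in-paper argument to compare against; your proposal has to stand on its own. Your treatments of claims~1 and~2 are correct and are exactly the classical arguments: for claim~1 the essential point, which you do flag, is that the agent block carries the strictly positive weight $(1-\lambda)/M>0$ for every $\lambda\in[0,1)$, so a point strictly improving all objectives strictly decreases the scalarized loss even when $\lambda=0$ kills the coordinator block; for claim~2, weak domination plus minimality forces a second minimizer, contradicting uniqueness. Both are complete.

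Your diagnosis of claim~3 is also the right one, and it is worth making sharper: the statement is not merely hard to prove for the restricted two-parameter weight family of \eqref{weighted:sum:problem} --- it is false for it. Take $\mathcal{U}=[0,1]$, $M=2$, $N=1$, $C_1(\Theta)=\Theta$, $C_2(\Theta)=-2\Theta$, $S_1\equiv 0$; all objectives are affine, hence convex. Every $\Theta\in[0,1]$ is Pareto optimal (improving $C_1$ worsens $C_2$ and vice versa), but the scalarized objective equals $-\tfrac{1-\lambda}{2}\,\Theta$, whose unique minimizer is $\Theta=1$ for every $\lambda\in[0,1)$; thus $\Theta^*=0$ is Pareto optimal yet never solves \eqref{weighted:sum:problem}. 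The theorem cited from \cite{miettinen1999nonlinear} produces a separating vector of weights ranging over the whole simplex of $\mathbb{R}^{M+N}_{\geq 0}$, and collapsing it to equal weights within the agent group and within the coordinator group loses exactly the degrees of freedom the separation argument needs. So your convex-separation sketch proves the correct general statement, and the honest conclusion is that part~3 should either be restated for general nonnegative weights or be accompanied by an additional symmetry hypothesis; as written it does not follow from the cited result, and your proof (correctly) cannot close that gap because no proof can.
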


Our main concern in the present paper is to propose a computational strategy allowing to solve \eqref{weighted:sum:problem} efficiently. To this end, we shall first rewrite the problem in the equivalent form 
\begin{equation}\label{problem:weighted:Fi}
    \begin{array}{ll}
        \displaystyle\min_{\Theta\in \mathcal{U}} \frac{1-\lambda}{M}\sum_{i=1}^{M} F_i(\Theta),
        \\
        \displaystyle F_i(\Theta):= C_i(\Theta) + \alpha\sum_{j=1}^{N} S_j(\Theta), & \quad\text{ for all } i\in\inter{M},
        \\
        \displaystyle \alpha:=\frac{\lambda}{(1-\lambda)N}.
    \end{array} 
\end{equation}

This formulation \eqref{problem:weighted:Fi} bears resemblance to the FedAvg algorithm introduced in \cite{mcmahan2017communication}, yet it is notably more general. In FedAvg, the coordinator acts passively by aggregating agents updates without directly influencing their local objectives. In contrast, our framework grants the coordinator an active role by embedding its preferences into the local agents’ objectives through scalarization. This mechanism allows the coordinator to steer the training process while preserving the decentralized nature of local computations.

These mentioned similarities inspired us to propose Algorithm \ref{algo:DL} to solve \eqref{problem:weighted:Fi}. As we shall see in our simulation experiments in Section \ref{section:numerical:experiments}, this proposed approach indeed allows to compute optimal solutions to the MOO problem we are considering. 

\begin{algorithm}
\caption{Decentralized Learning algorithm for \eqref{problem:weighted:Fi}}\label{algo:DL}
    \begin{algorithmic}[1]
        \Require $\Theta^0\in\mathcal{U}$ - initial guess for the parameters

        $\quad\, T\in\mathbb{N}^*$ - total number of iterations
        
        $\quad\, \tau\in\mathbb{N}^*$ - total number of epochs for the agents' updates
        
        $\quad\, \eta>0$ - learning rate for the agents' updates
        
        $\quad\, \lambda\in [0,1)$ - agents-coordinator trade-off

        \Ensure $\Theta^T$ - optimized parameters

        \For{$t\in \inter{T}$}
            \For{$i\in \inter{M}$}
                \State{Set $\Theta^{t-1,0}_i\leftarrow \Theta^0$.}
                \For{$k\in \inter{\tau}$}
                    \State{Compute $g_i^{t-1,k-1}$: stochastic gradient of $C_i(\Theta_i^{t-1,k-1})$.} 
                    \State{Compute $h_i^{t-1,k-1}$: stochastic gradient of $\displaystyle\sum_{j=1}^N S_j(\Theta_i^{t-1,k-1})$.} 
                    \State{Update: \begin{align}\label{eq:update_algo} \displaystyle\Theta_i^{t-1,k}\leftarrow\Theta_i^{t-1,k-1} -\eta \left(g_i^{t-1,k-1} + \alpha h_i^{t-1,k-1} \right).\end{align}}
                \EndFor
            \EndFor
            \State{Set $\displaystyle\Theta^t = \sum_{i=1}^M \Theta^{t-1,\tau}_i$}            
        \EndFor               
    \end{algorithmic}
\end{algorithm}

Moreover, the convergence of Algorithm \ref{algo:DL} can be guaranteed under the following standard regularity assumptions for the objective functions $\{C_i\}_{i=1}^M$ and $\{S_j\}_{j=1}^N$.

\begin{assumption}\label{ass:A1}
The objectives $\{C_i\}_{i=1}^M$ and $\{S_j\}_{j=1}^N$ are convex and $L$-smooth, that is, there exist two constants $L_C,L_S>0$ such that 
\begin{displaymath}
    \text{for all } \Theta,\tilde{\Theta}\in \mathcal{U}, \quad 
    \begin{array}{ll}
        \|\nabla C_i(\Theta) - \nabla C_i(\tilde{\Theta})\| \leq L_C \|\Theta -\tilde{\Theta}\| & \text{ for all } i\in\inter{M}
        \\
        \|\nabla S_j(\Theta) - \nabla S_j(\tilde{\Theta})\|\leq L_S \|\Theta - \tilde{\Theta}\| & \text{ for all } j\in\inter{N}
    \end{array}. 
\end{displaymath}
\end{assumption}

\begin{assumption}\label{ass:A2}
For all $i\in\inter{M}$, $k\in\inter{\tau}$, and $t\in\inter{T}$, the agents' stochastic gradients fulfill the following properties:

\medskip
\noindent For all $\Theta_i^{t,k}\in \mathcal{U}$,     
\begin{equation}\label{ass:A3:cond:01}
    \mathbb{E} \left[g_i^{t,k}\big|\Theta_i^{t,k}\right] = \nabla C_i\left(\Theta_i^{t,k}\right) \quad\text{ and }\quad \mathbb{E} \left[h_i^{t,k}\big|\Theta_i^{t,k}\right] = \nabla S\left(\Theta_i^{t,k}\right);
\end{equation}

\medskip 
\noindent There exist $\sigma_C,\sigma_S>0$ such that 
\begin{equation}\label{ass:A3:cond:02}
    \mathbb{E}\left[\Big\|g_i^{t,k} - \nabla C_i(\Theta_i^{t,k}) \Big\|^2 \Big| \Theta_i^{t,k}\right]\leq \sigma_C^2 \quad\text{ and }\quad \mathbb{E}\left[\Big\|h_i^{t,k} - \nabla S(\Theta_i^{t,k})\Big\|^2 \Big| \Theta_i^{t,k}\right] \leq \sigma_S^2.
\end{equation}

\end{assumption}

\begin{assumption}\label{ass:A3}
There exists $\zeta>0$ such that
\begin{align}\label{eq:zeta}
    \max_{i\in\inter{M}} \sup_{\Theta\in \mathcal{U}} \left\| \nabla F_i(\Theta) - \nabla F(\Theta)  \right\| \leq \zeta.
\end{align}
\end{assumption}

In particular, we have the following convergence results, whose proof will be given in Appendix \ref{section:proof:of:the:main:result}.

\begin{theorem}\label{Theorem:Per:Round:Progress}
Suppose that Assumptions \ref{ass:A1}-\ref{ass:A2} are fulfilled with regularity constants $L_C,L_S,\sigma_C,\sigma_S$. Set 
\begin{align}\label{eq:L_def}
    L:= L_C+\alpha L_S
\end{align}
and
\begin{align}\label{eq:Sigma_def}
    \Sigma:= \sigma_C^2 +\alpha^2 N^2 \sigma_S^2,
\end{align}
with $\alpha$ as in \eqref{problem:weighted:Fi}, and take the learning rate 
\begin{align}\label{eq:LR}
    \eta \leq \dfrac{1}{4L},
\end{align}
For all $t\in\inter{T-1}\cup\{0\}$ and $k\in\inter{\tau}$, with $T,\tau\in\mathbb{N}^\ast$, define the averaged parameters
\begin{align}\label{eq:average} 
    \overline{\Theta}^{t,k}:=\dfrac{1}{M} \sum_{i=1}^{M} \Theta_i^{t,k}.
\end{align}

Let $\Theta^T\in\mathcal{U}$ be the output of Algorithm \ref{algo:DL}. Then, the following convergence estimate holds
\begin{align}\label{estimate:Lemma:1}
    \mathbb{E}\left[\dfrac{1}{\tau} \sum_{k=1}^{\tau} F(\overline{\Theta}^{t,k}) - F(\Theta^T) \bigg|\mathcal{F}^{t,0} \right] \leq & \dfrac{1}{2\eta \tau} \left(\Big\|\overline{\Theta}^{t,0} - \Theta^T\Big\|^2 - \mathbb{E} \left[\Big\|\overline{\Theta}^{t,\tau} - \Theta^T\Big\|^2 \Big| \mathcal{F}^{t,0} \right] \right) \notag 
    \\
    &+ \dfrac{L}{M\tau} \sum_{k=1}^{\tau} \sum_{i=1}^{M} \mathbb{E} \left[\Big\|\Theta_i ^{t,k} - \overline{\Theta}^{t,k}\Big\|^2 \Big| \mathcal{F}^{t,0}\right] +\dfrac{2\Sigma\eta}{M},
\end{align}
where $\mathcal{F}^{t,0}$ is the $\sigma$-field representing all the historical information up to the start of the $t^{\text{th}}$-round.
\end{theorem}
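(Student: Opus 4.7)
\textbf{Proof plan for Theorem \ref{Theorem:Per:Round:Progress}.} The plan is to adapt the standard per-round analysis of local-SGD / FedAvg to our scalarized objective, by tracking the \emph{virtual averaged iterate} $\overline{\Theta}^{t,k}$ and using convexity and $L$-smoothness of each $F_i = C_i + \alpha\sum_j S_j$ (which follows from Assumption \ref{ass:A1} with smoothness constant $L = L_C + \alpha L_S$ as in \eqref{eq:L_def}). First I would write the update rule in compact form: setting $\tilde g_i^{t,k} := g_i^{t,k} + \alpha h_i^{t,k}$, the local step \eqref{eq:update_algo} reads $\Theta_i^{t,k} = \Theta_i^{t,k-1} - \eta \tilde g_i^{t,k-1}$, and averaging over $i$ gives the virtual dynamics $\overline{\Theta}^{t,k} = \overline{\Theta}^{t,k-1} - \eta \bar g^{t,k-1}$ with $\bar g^{t,k-1} := \frac{1}{M}\sum_{i=1}^M \tilde g_i^{t,k-1}$. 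Assumption \ref{ass:A2} implies that, conditionally on $\mathcal{F}^{t,k-1}$, the vector $\tilde g_i^{t,k-1}$ is unbiased for $\nabla F_i(\Theta_i^{t,k-1})$ with variance bounded by $\Sigma$ defined in \eqref{eq:Sigma_def}, and the usual cross-agent independence then yields
\begin{align*}
    \mathbb{E}\Big[\big\|\bar g^{t,k-1}\big\|^2 \,\big|\, \mathcal{F}^{t,k-1}\Big]
    \le \Big\|\tfrac{1}{M}\textstyle\sum_{i=1}^M \nabla F_i(\Theta_i^{t,k-1})\Big\|^2 + \frac{\Sigma}{M}.
\end{align*}

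Next I would run the standard descent identity
\begin{align*}
    \big\|\overline{\Theta}^{t,k} - \Theta^T\big\|^2
    = \big\|\overline{\Theta}^{t,k-1} - \Theta^T\big\|^2
    - 2\eta\,\langle \bar g^{t,k-1},\, \overline{\Theta}^{t,k-1} - \Theta^T\rangle
    + \eta^2 \big\|\bar g^{t,k-1}\big\|^2,
\end{align*}
take conditional expectation, and replace $\bar g^{t,k-1}$ by its mean $\frac{1}{M}\sum_i \nabla F_i(\Theta_i^{t,k-1})$ in the inner product. Now comes the core convexity argument: using $\overline{\Theta}^{t,k-1} - \Theta^T = (\overline{\Theta}^{t,k-1} - \Theta_i^{t,k-1}) + (\Theta_i^{t,k-1} - \Theta^T)$ and the convexity inequality $\langle \nabla F_i(\Theta_i^{t,k-1}), \Theta_i^{t,k-1} - \Theta^T\rangle \ge F_i(\Theta_i^{t,k-1}) - F_i(\Theta^T)$, together with the $L$-smoothness bound $F_i(\Theta_i^{t,k-1}) \ge F_i(\overline{\Theta}^{t,k-1}) + \langle \nabla F_i(\Theta_i^{t,k-1}), \Theta_i^{t,k-1} - \overline{\Theta}^{t,k-1}\rangle - \tfrac{L}{2}\|\Theta_i^{t,k-1} - \overline{\Theta}^{t,k-1}\|^2$ (or its quadratic-upper-bound variant), I would average over $i$ to produce the progress quantity $F(\overline{\Theta}^{t,k-1}) - F(\Theta^T)$ on the right, with a residual proportional to the consensus error $\frac{1}{M}\sum_i \|\Theta_i^{t,k-1} - \overline{\Theta}^{t,k-1}\|^2$. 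A Young/Cauchy--Schwarz step, calibrated by the learning-rate condition $\eta \le 1/(4L)$ in \eqref{eq:LR}, absorbs the quadratic $\eta^2\|\frac{1}{M}\sum_i \nabla F_i\|^2$ piece into the linear descent term, which is the standard mechanism for controlling the bias introduced by the local steps.

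Finally I would sum the resulting one-step inequality for $k = 1,\dots,\tau$ and divide by $\tau$. The $\|\overline{\Theta}^{t,k} - \Theta^T\|^2$ terms telescope into $\|\overline{\Theta}^{t,0} - \Theta^T\|^2 - \mathbb{E}[\|\overline{\Theta}^{t,\tau} - \Theta^T\|^2 \mid \mathcal{F}^{t,0}]$, divided by $2\eta\tau$; the smoothness-induced residuals assemble into the consensus-error average $\frac{L}{M\tau}\sum_{k,i} \mathbb{E}[\|\Theta_i^{t,k} - \overline{\Theta}^{t,k}\|^2 \mid \mathcal{F}^{t,0}]$; and the variance contribution at every step yields the $\frac{2\Sigma\eta}{M}$ term, matching \eqref{estimate:Lemma:1}. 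A tower-property step is needed to reduce the conditioning from $\mathcal{F}^{t,k-1}$ to $\mathcal{F}^{t,0}$. I expect the \textbf{main obstacle} to be the convexity--smoothness bookkeeping in the middle step: getting the coefficient of the consensus error to be exactly $L$ rather than a larger multiple requires carefully choosing the Young's inequality splits so that the $\eta^2\|\cdot\|^2$ gradient term is exactly cancelled by the convexity-generated $-\eta(F(\overline{\Theta}^{t,k-1}) - F(\Theta^T))$ slack under the constraint $\eta \le 1/(4L)$, rather than leaving an extra factor of $L$ or $\alpha$ floating around.
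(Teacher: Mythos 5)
Your plan is sound and would reach an estimate of the stated type, but it follows the classical local-SGD route rather than the one the paper actually uses, and the difference is worth spelling out. You expand $\|\overline{\Theta}^{t,k}-\Theta^T\|^2$ around the \emph{old} averaged iterate, replace $\bar g^{t,k-1}$ by its conditional mean in the inner product, invoke convexity at the local points $\Theta_i^{t,k-1}$, and then must absorb the quadratic term $\eta^2\|\frac{1}{M}\sum_i\nabla F_i(\Theta_i^{t,k-1})\|^2$ back into the descent slack — precisely the step you flag as the main obstacle. The paper sidesteps that obstacle entirely: it derives the exact algebraic identity
\begin{align*}
\frac{1}{M}\sum_{i=1}^M\Big\langle \MC{G}_i^{t,k-1},\,\overline{\Theta}^{t,k}-\Theta^T\Big\rangle=\frac{1}{2\eta}\Big(\big\|\overline{\Theta}^{t,k-1}-\Theta^T\big\|^2-\big\|\overline{\Theta}^{t,k}-\overline{\Theta}^{t,k-1}\big\|^2-\big\|\overline{\Theta}^{t,k}-\Theta^T\big\|^2\Big),
\end{align*}
i.e. it pairs the stochastic direction with the \emph{new} iterate, and applies the smoothness upper bound at $\overline{\Theta}^{t,k}$ so that the progress term is $F(\overline{\Theta}^{t,k})$; no bound of the form $\|\nabla F\|^2\le 2L(F-F^\ast)$ is ever needed, and the condition $\eta\le 1/(4L)$ enters only through $L-\frac{1}{4\eta}\le 0$ after a single Young step on the noise inner product. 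Two consequences of your choice deserve attention. First, your one-step inequality naturally produces $F(\overline{\Theta}^{t,k-1})-F(\Theta^T)$, so after summing you bound $\frac{1}{\tau}\sum_{k=0}^{\tau-1}F(\overline{\Theta}^{t,k})$ rather than $\frac{1}{\tau}\sum_{k=1}^{\tau}F(\overline{\Theta}^{t,k})$ as in \eqref{estimate:Lemma:1} — harmless for the eventual rate, but not literally the stated inequality. Second, your claimed variance bound $\mathbb{E}[\|\bar g^{t,k-1}\|^2\,|\,\mathcal{F}^{t,k-1}]\le\|\frac{1}{M}\sum_i\nabla F_i\|^2+\Sigma/M$ tacitly assumes the two noise sources $g_i-\nabla C_i$ and $h_i-\nabla S$ are uncorrelated; Assumption \ref{ass:A2} does not grant this, and the paper instead uses $\|a+b\|^2\le 2\|a\|^2+2\|b\|^2$ to get $2\Sigma/M$, which is where the constant in the $\frac{2\Sigma\eta}{M}$ term comes from. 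Neither point invalidates your approach — it is the standard one in \cite{stich2018local,khaled2020tighter} — but the paper's identity-based bookkeeping is what makes the coefficient of the consensus error come out as exactly $L$ and the progress term land on the new iterate without extra factors.
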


\begin{theorem}\label{main:theorem}
Suppose that Assumptions \ref{ass:A1}-\ref{ass:A2}-\ref{ass:A3} are fulfilled with regularity constants $L_C,L_S,\sigma_C,\sigma_S,\zeta$. Set $D:=\|\Theta^{0,0} - \Theta^*\|$ and take the learning rate $\eta$ as
\begin{align}
    \label{LR:main:thm}
    \eta =\frac{1}{4L\tau\sqrt{T}},
\end{align} 
with $T,\tau\in\mathbb{N}^\ast$ and $L$ as in \eqref{eq:L_def}. For all $t\in\inter{T-1}\cup\{0\}$ and $k\in\inter{\tau}$, let $\overline{\Theta}^{t,k}$ be the averaged parameter given by \eqref{eq:average}. Let $\Theta^T\in\mathcal{U}$ be the output of Algorithm \ref{algo:DL}. Then, the following convergence estimate holds
\begin{align}\label{estimate:main:result}    
    \mathbb{E} \left[\dfrac{1}{\tau T} \sum_{t=0}^{T-1} \sum_{k=1}^{\tau} F(\overline{\Theta}^{t,k}) - F(\Theta^T) \right] \leq \dfrac{2D^2L}{\sqrt{T}} + \dfrac{\Sigma}{2LM\tau\sqrt{T}}  + \dfrac{5\zeta^2}{8LT} +\dfrac{\Sigma}{4L\tau T},
\end{align} 
with $\zeta$ and $\Sigma$ and $\zeta$ given by \eqref{eq:zeta} and \eqref{eq:Sigma_def}, respectively.
\end{theorem}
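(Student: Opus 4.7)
The plan is to telescope the per-round progress estimate \eqref{estimate:Lemma:1} across $t = 0, \ldots, T-1$, take full expectations via the tower property, and then substitute the chosen learning rate \eqref{LR:main:thm}. I interpret the reference parameter $\Theta^T$ appearing in \eqref{estimate:Lemma:1} as the optimal $\Theta^\ast$ used to define $D$; the per-round bound is established in Appendix \ref{section:proof:of:the:main:result} for an arbitrary fixed comparison point, so this substitution is admissible. The three contributions on the right-hand side of \eqref{estimate:Lemma:1} — the quadratic distance, the client-drift term, and the residual variance — must then be treated independently.

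For the quadratic distance term, I would exploit the aggregation line of Algorithm \ref{algo:DL} to identify $\overline{\Theta}^{t,\tau} = \overline{\Theta}^{t+1,0}$, so that summing
\[
\Big\|\overline{\Theta}^{t,0}-\Theta^\ast\Big\|^2 - \mathbb{E}\bigl[\|\overline{\Theta}^{t,\tau}-\Theta^\ast\|^2\bigr]
\]
over $t = 0, \ldots, T-1$ telescopes, leaving at most $\|\overline{\Theta}^{0,0}-\Theta^\ast\|^2 = D^2$. Dividing by $2\eta\tau T$ and plugging in $\eta = 1/(4L\tau\sqrt{T})$ from \eqref{LR:main:thm} yields the leading term $\frac{2D^2L}{\sqrt{T}}$. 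The round-independent variance contribution $\frac{2\Sigma\eta}{M}$ becomes $\frac{\Sigma}{2LM\tau\sqrt{T}}$ after substitution, matching the second term in \eqref{estimate:main:result}.

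The principal obstacle is controlling the client-drift contribution $\frac{L}{M\tau}\sum_{k=1}^{\tau}\sum_{i=1}^{M}\mathbb{E}\bigl[\|\Theta_i^{t,k}-\overline{\Theta}^{t,k}\|^2\bigr]$. Since every agent is initialized at the common parameter at the start of round $t$, we have $\Theta_i^{t,0} = \overline{\Theta}^{t,0}$, and unrolling \eqref{eq:update_algo} gives
\[
\Theta_i^{t,k} - \overline{\Theta}^{t,k} = -\eta \sum_{s=0}^{k-1} \Bigl[ (g_i^{t,s} + \alpha h_i^{t,s}) - \tfrac{1}{M}\sum_{j=1}^{M}(g_j^{t,s} + \alpha h_j^{t,s}) \Bigr].
\]
Squaring, taking conditional expectations, and decomposing into a centered-noise part (bounded via the variance estimates \eqref{ass:A3:cond:02} of Assumption \ref{ass:A2}) and a deterministic mean-gradient part (bounded via the dissimilarity condition \eqref{eq:zeta} of Assumption \ref{ass:A3}), a short induction on $k$ — exploiting $L$-smoothness from Assumption \ref{ass:A1} together with $\eta L \leq 1/(4\tau)$ — delivers a drift estimate of order $\eta^2 \tau^2 \zeta^2 + \eta^2 \tau \Sigma$ per iterate.

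Inserting this drift bound back into \eqref{estimate:Lemma:1}, averaging over $k$ and $t$, and substituting $\eta = 1/(4L\tau\sqrt{T})$ once more produces the remaining two contributions $\frac{5\zeta^2}{8LT}$ and $\frac{\Sigma}{4L\tau T}$, completing \eqref{estimate:main:result}. The truly delicate point is the drift induction: a standard local-SGD/FedAvg argument immediately gives the correct scaling in $\tau$, $T$, $\zeta$, $\Sigma$, but extracting the precise constants $5/8$ and $1/4$ announced in \eqref{estimate:main:result} requires careful bookkeeping of how the stochastic variance and the heterogeneity $\zeta$ compound across the $\tau$ inner iterations before the aggregation step.
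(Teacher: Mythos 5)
Your proposal is correct and follows the same skeleton as the paper's argument: telescope the per-round estimate \eqref{estimate:Lemma:1} over $t$, control the client-drift term separately, and substitute $\eta=1/(4L\tau\sqrt{T})$, which indeed yields the four constants $2D^2L/\sqrt{T}$, $\Sigma/(2LM\tau\sqrt{T})$, $5\zeta^2/(8LT)$ and $\Sigma/(4L\tau T)$ from the intermediate bound $D^2/(2\eta\tau T)+2\Sigma\eta/M+10L\tau^2\eta^2\zeta^2+4L\tau\eta^2\Sigma$. The one place where you genuinely diverge is the drift estimate. The paper's Lemma \ref{lemma:client_drift} bounds the \emph{pairwise} quantities $\mathbb{E}\bigl[\|\Theta_i^{t,k+1}-\Theta_{i+1}^{t,k+1}\|^2\,\big|\,\mathcal{F}^{t,k}\bigr]$ via a one-step recursion with multiplicative factor $(1+1/\tau)$ — obtained from Young's inequality, $L$-smoothness and the heterogeneity bound \eqref{eq:zeta}, with the condition \eqref{eq:LR} killing the gradient-difference term — and then iterates the recursion and passes to $\|\Theta_i^{t,k}-\overline{\Theta}^{t,k}\|^2$ by convexity of the squared norm, arriving at $10\tau^2\eta^2\zeta^2+4\tau\eta^2\Sigma$. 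You instead unroll \eqref{eq:update_algo} from the common initialization $\Theta_i^{t,0}=\overline{\Theta}^{t,0}$, split the accumulated sum into a centered-noise part (handled by \eqref{ass:A3:cond:02} and martingale orthogonality) and a mean-gradient part (handled by \eqref{eq:zeta} plus smoothness and an induction on $k$ under $\eta L\leq 1/(4\tau)$). Both are standard local-SGD drift arguments producing the same $\eta^2\tau^2\zeta^2+\eta^2\tau\Sigma$ scaling; your direct unrolling avoids the paper's somewhat delicate final passage from pairwise distances to deviations from the average, while the paper's recursion makes the role of the step-size restriction \eqref{eq:LR} more explicit. To fully close your version you would still need to track constants through the induction so as to land on (or below) the coefficients $10$ and $4$, and, as you rightly note, one must read the comparison point in \eqref{estimate:Lemma:1} as the fixed minimizer $\Theta^\ast$ defining $D$ — a notational conflation the paper itself makes silently.
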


The strategy and insights used in the proof of Theorems \ref{Theorem:Per:Round:Progress} and \ref{main:theorem} can be found in several articles in the ML community, see for instance \cite{karimireddy2019scaffold,khaled2020tighter,stich2018local}. Moreover, our results are in accordance with known convergence principles in stochastic optimization (see, e.g., \cite{bottou2018optimization}), and in particular highlight the importance of selecting a learning rate $\eta$ with reduces along the minimization process. 

In fact, Theorem \ref{Theorem:Per:Round:Progress} illustrates the interplay between the learning rate and the bounds on the variance of the stochastic directions. If there were no noise in the gradient computation, i.e. $\sigma_C=\sigma_S=0$ (which implies $\Sigma=0$ according to \eqref{eq:Sigma_def}), then at each iteration $t$ one could obtain linear convergence to the optimal value as the number of epochs $\tau$ increases. This is a standard result for the full gradient method with a sufficiently small positive step-size. On the other hand, when the gradient computation is noisy, i.e. when $\Sigma$ is large, one clearly loses this property. One can still use a fixed step-size and be sure that the expected objective values will converge linearly to a neighborhood of the optimal value, but, after some point, the noise in the gradient estimates prevent further progress. In this framework, to ensure full convergence of the algorithm, this growing noise $\Sigma$ must be compensated through the adoption of a reducing learning rate strategy.

\begin{remark}\label{rem:lambda}
As previously mentioned, in the original scalarized formulation \eqref{weighted:sum:problem}, the parameter $\lambda \in [0,1)$ measures the effects of the local agent objectives and the coordinator's suggestions on the global MOO task. In particular, when $\lambda \to 1^-$, the coordinator's influence dominates while the agents' contribution disappears, resulting in a training problem which is essentially not taking into account the data. In this situation, one may expect the training task to become more difficult, and our convergence Theorems \ref{Theorem:Per:Round:Progress} and \ref{main:theorem} partially support this observation.

Notice that, although not explicitly written in the statements of our theorems, the regularity constant $L$ and $\Sigma$ defined in \eqref{eq:L_def} and \eqref{eq:Sigma_def} depend explicitly on $\lambda$:
\begin{align*}
    L = L_C + \alpha L_S = L_C + \frac{\lambda L_s}{(1-\lambda)N} \quad\text{ and }\quad \Sigma = \sigma_C^2 + \alpha^2N^2\sigma_S^2 = \sigma_C^2 + \frac{\lambda^2\sigma_s^2}{(1-\lambda)^2}.
\end{align*}

In the limit $\lambda\to 1^-$, we can then see how these constants blow-up, which affects the convergence of Algorithm \ref{algo:DL} in two main aspects:
\begin{itemize}
    \item[1.] the learning rate \eqref{eq:LR} needed in Theorem \ref{Theorem:Per:Round:Progress} tends to zero, while the radius of convergence in \eqref{estimate:Lemma:1} deteriorates;
    \item[2.] to maintain stable the convergence estimate \eqref{estimate:main:result}, $T$ must be very large, roughly of the order of $\Sigma^2$. 
\end{itemize}

For the sake of completeness, we shall comment that these are simply heuristic considerations that may not be respected in practical implementations of our method. In fact, our simulation experiments in Section \ref{section:numerical:experiments} will show how, through a suitable selection of the coordinator's objectives, is it still possible to obtain good results even for large values of $\lambda$.
\end{remark}

\section{Experimental evaluation}\label{section:numerical:experiments}

To assess the empirical behavior of the proposed scalarized optimization framework, we perform a series of experiments addressing a classification task over the MNIST-digits dataset, consisting of grayscale images of handwritten digits from 0 to 9 (10 labels) with a size of $28\times 28$ pixels.  

We explore both statistically homogeneous (IID) and heterogeneous (non-IID) data distributions among agents. Our goal is to understand how the scalarization parameter $\lambda$, which balances local and global objectives, influences learning dynamics, convergence behavior, and generalization accuracy. These experiments offer insight into the role of coordination under varying levels of agent autonomy and data heterogeneity. 

Each agent trains a standard Convolutional Neural Network (CNN) whose architecture is described in Table \ref{tab:cnn-architecture}

\begin{table}[h!]
\centering
    \begin{tabular}{|c|c|c|c|l|}
    \hline
    \rowcolor{lightgray} \textbf{Layer \#} & \textbf{Layer type} & \textbf{Input shape} & \textbf{Output shape} & \textbf{Details} 
    \\
    \hline\hline 1 & Input & (1, 28, 28) & (1, 28, 28) & \begin{tabular}{l} Grayscale image \end{tabular}
    \\
    \hline 2 & Conv2D & (1, 28, 28) & (32, 28, 28) & \begin{tabular}{l} 32 filters \\ $3 \times 3$ kernel \\ padding=1 \\ ReLU activation \end{tabular}
    \\
    \hline 3 & MaxPooling2D & (32, 28, 28) & (32, 14, 14) & \begin{tabular}{l} $2 \times 2$ kernel \\ stride=2 \end{tabular}
    \\
    \hline 4 & Conv2D & (32, 14, 14) & (64, 14, 14) & \begin{tabular}{l} 64 filters \\ $3 \times 3$ kernel \\ padding=1 \\ ReLU \end{tabular}
    \\
    \hline 5 & MaxPooling2D & (64, 14, 14) & (64, 7, 7) & \begin{tabular}{l} $2 \times 2$ kernel \\ stride=2 \end{tabular}
    \\
    \hline 6 & Flatten & (64, 7, 7) & (3136,) & \begin{tabular}{l} Flatten to vector \end{tabular}
    \\
    \hline 7 & Dense & (3136,) & (128,) & \begin{tabular}{l} Fully connected \\ ReLU activation \end{tabular}
    \\
    \hline 8 & Output (Dense) & (128,) & (10,) & \begin{tabular}{l} Softmax activation \\ 10 classes \end{tabular} 
    \\
    \hline
    \end{tabular}
    \caption{Summary of the CNN architecture employed in our experiments.}\label{tab:cnn-architecture}
\end{table}

The experiments were conducted on a local machine running macOS 15.5. The system is equipped with an 8 core ARM64 CPU and 8 GB of RAM. The algorithm was implemented in Python 3.10.13 using PyTorch 2.5.1. GPU acceleration via CUDA was not available. However, as the machine is based on Apple Silicon, computations were executed on the CPU of the Metal Performance Shaders (MPS) backened, which is the default acceleration path for PyTorch on macOS systems without NVIDIA GPUs. The code to reproduce the examples can be found at \cite{Github}. 

\subsection{Experiment \#1: IID data distribution}

In this experiment, we explore the performance of the Algorithm \ref{algo:DL} under a homogeneous (IID) data distribution. In this setting, the dataset is partitioned among 5 agents, each one of them receiving 8000 images for training and 2000 for local validation, drawn randomly across all digit classes. We suppose the coordinator has the following single objective
\begin{align*}
    S_1(\Theta):=10^2 \|\Theta\|_2^2,\quad \text{ for all }\Theta\in \mathcal{U},
\end{align*}
where $\|\cdot\|_2$ denotes the discrete 2-norm.

We have set the local training epochs of each agent to $\tau=1$, to contain the risk of agents' drift. Moreover, throughout the training, local validation is performed at each round $t\in\inter{T}$ to monitor the progression of each agent's model using Accuracy and F1 Score, capturing both general correctness and class-wise balance of predictions. At the end of training, a final global evaluation is carried out using a test set of 10000 images, ensuring that this assessment is fully independent of any training or validation samples used during the process. The learning rate has been fixed to $\eta =0.001$.

In a first experiment, we have fixed the balancing parameter $\lambda$ in \eqref{problem:weighted:Fi} to the value $\lambda=0.87$. In light also of what we highlighted in Remark \ref{rem:lambda}, our concern here was to verify whether our algorithm was capable of achieving good classification performances for large values of $\lambda$, i.e., in the ``difficult'' regime in which the coordinator guidance dominates over the agent's training. The results of our simulations, illustrated in Tables \ref{tab:IID:accuracy}-\ref{tab:IID:F1:score} and Figure \ref{fig:exp1:lambda:0.87} show that this is indeed the case.

\begin{table}[!h]
    \centering
    \begin{tabular}{|c|c|c|c|c|c|c|} \hline 
        & \multicolumn{6}{|c|}{\cellcolor{lightgray}\textbf{Round} $t$}
        \\
        \hline \cellcolor{lightgray}\textbf{Validation accuracy} & $t=1$ & $t=10$ & $t=20$ & $t=30$ & $t=40$ & $t=50$
        \\ 
        \hline \hline Agent 1 & 0.3900 & 0.9367 & 0.9367 & 0.9429 & 0.9371 & 0.9396 
        \\ 
        \hline Agent 2 & 0.3804  & 0.9317  & 0.9346 & 0.9371 & 0.9317  & 0.9325 
        \\ 
        \hline Agent 3 & 0.3825  & 0.9342  & 0.9379& 0.9392 & 0.9417  & 0.9383 
        \\ 
        \hline Agent 4 & 0.3842  & 0.9267 & 0.9279 & 0.9329 & 0.9313  & 0.9317 
        \\ 
        \hline Agent 5 & 0.3842  & 0.9371  & 0.9383 & 0.9396 & 0.9387  & 0.9363 
        \\ 
        \hline
    \end{tabular}
    \caption{Validation accuracy of each agent after 1, 10, 20, 30, 40 and 50 rounds, with $\lambda=0.87$ and IID data distribution.}\label{tab:IID:accuracy}
\end{table}

\begin{table}[!h]
    \centering
    \begin{tabular}{|c|c|c|c|c|c|c|} \hline
        & \multicolumn{6}{|c|}{\cellcolor{lightgray}\textbf{Round} $t$}
        \\
        \hline \cellcolor{lightgray}\textbf{Validation F1 score} & $t=1$ & $t=10$ & $t=20$ & $t=30$ & $t=40$ & $t=50$
        \\ 
        \hline \hline Agent 1 & 0.3966 & 0.9359 & 0.9362 & 0.9422 & 0.9364 & 0.9390 
        \\ 
        \hline
        Agent 2 & 0.3912 & 0.9307 & 0.9337 & 0.9361 & 0.9307 & 0.9348 
        \\ 
        \hline
        Agent 3 & 0.3867 & 0.9331 & 0.9372 & 0.9384 & 0.9410 & 0.9377 
        \\ 
        \hline
        Agent 4 & 0.3929 & 0.9264 & 0.9280 & 0.9325 & 0.9383 & 0.9316 
        \\ 
        \hline
        Agent 5 & 0.3887 & 0.9366 & 0.9381 & 0.9391 & 0.9383 & 0.9358 
        \\ 
        \hline
    \end{tabular}
    \caption{Validation F1 score of each agent after 1, 10, 20, 30, 40 and 50 rounds, with $\lambda=0.87$ and IID data distribution.}\label{tab:IID:F1:score}
\end{table}

\begin{figure}[!h]
    \centering
    \includegraphics[width=1\linewidth]{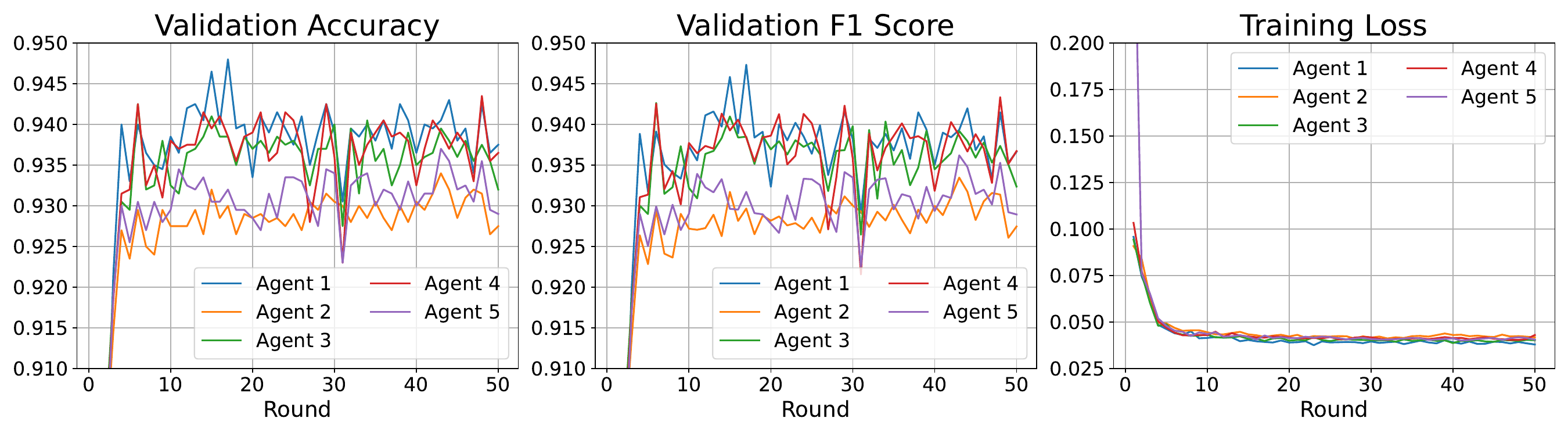}
    \caption{Evolution over 50 rounds of the agents' training performances with $\lambda=0.87$ and IID data distribution.}
    \label{fig:exp1:lambda:0.87}
\end{figure}

In fact, the validation accuracy and F1 score achieved by the model after 50 rounds are both above the threshold of $0.93$. While this is slightly below to the values one would expect when classifying MNIST using a CNN, we believe these results are still satisfactory given the limited influence the agents had in this training process. 

Moreover, these classification performances can be improved by reducing the value of $\lambda$, so that to give more importance to the agents' contribution. This phenomenon is illustrated in Table \ref{tab:IID:lambda}, collecting the performances on the test dataset of the model trained for 50 rounds, for different values of $\lambda\in [0, 1)$.

\begin{table}[!h]
    \centering
    \begin{tabular}{|c|c|c|c|c|c|c|} \hline 
        & \cellcolor{lightgray} $\lambda=0.00$ & \cellcolor{lightgray} $\lambda=0.25$ & \cellcolor{lightgray} $\lambda=0.50$ & \cellcolor{lightgray} $\lambda=0.65$ & \cellcolor{lightgray} $\lambda=0.75$ & \cellcolor{lightgray} $\lambda=0.87$
        \\ 
        \hline \hline \cellcolor{lightgray} \textbf{Test accuracy} & 0.9837 & 0.9846 & 0.9728 & 0.9657 & 0.9538 & 0.9409 
        \\ 
        \hline \cellcolor{lightgray} \textbf{Test F1 score} & 0.9836 & 0.9845 & 0.9727 & 0.9655 & 0.9535 & 0.9404 
        \\ 
        \hline 
    \end{tabular}
    \caption{Test accuracy and F1 score for different values of $\lambda$ after 50 rounds with IID data distribution.}
    \label{tab:IID:lambda}
\end{table}

This experiment highlights the delicate balance between enforcing global coordination and preserving agent-specific performance. Increasing the weight $\lambda$ on the coordinator's objective results in a measurable decline in accuracy and F1 score, reinforcing the necessity of careful tuning. The results demonstrate the trade-off inherent to multi-objective formulations, where global fairness or structure must be weighted against local adaptativity. These considerations are in agreement with Remark \ref{rem:lambda}, where we have pointed out how a large value of $\lambda$ disfavors the convergence of Algorithm \ref{algo:DL}. At this respect, we also point out that experiments we have conducted with $\lambda\geq 0.88$ have delivered negative results in which we are incapable of training the model. 

\subsection{Experiment \# 2: non-IID data distribution}

In this second experiment, we have considered a heterogeneous (non-IID) data distribution among the agents. In more detail, each of the five agents is assigned exactly two unique digit classes of MNIST:
\begin{itemize}
    \item Agent 1: digits 2 and 8;
    \item Agent 2: digits 4 and 9;
    \item Agent 3: digits 1 and 6;
    \item Agent 4: digits 3 and 7;
    \item Agent 5: digits 0 and 5.
\end{itemize}

From these two classes, each agent is assigned an equal and fixed number of samples (8000 training images and 2000 validation images) to ensure a balanced and fair local training environment. The class partitions are strictly non-overlapping, and thus the data distribution is highly non-IID across the agents. The coordinator's objective in this case is given by 
\begin{align*}
    S_1(\Theta):=10^7 \|\Theta\|_2^2, \quad\text{ for all } \Theta\in \mathcal{U}.
\end{align*}

As for the IID scenario before, we have set the local training epochs of each agent to $\tau=1$, to contain the risk of agents' drift. We mention that this is particular important for non-IID data distributions, since we want to avoid the agents to overfit on their incomplete dataset. Moreover, once again, we have conducted a first experiment with $\lambda=0.87$, to examine the performances in a ``difficult'' training regime. The results of this experiment are collected in Tables \ref{tab:NoIID:accuracy}-\ref{tab:NoIID:F1:score} and Figure \ref{fig:exp2:lambda:087}.

\begin{table}[!h]
    \centering
    \begin{tabular}{|c|c|c|c|c|c|c|} \hline 
        & \multicolumn{6}{|c|}{\cellcolor{lightgray}\textbf{Round} $t$}
        \\
        \hline \cellcolor{lightgray}\textbf{Validation accuracy} & $t=1$ & $t=10$ & $t=20$ & $t=30$ & $t=40$ & $t=50$
        \\ 
        \hline \hline Agent 1 & 0.4790 & 0.6450 & 0.6805 & 0.6330 & 0.6950 & 0.7670 
        \\ 
        \hline Agent 2 & 0.0000 & 0.2735 & 0.5040 & 0.7295 & 0.7875 & 0.7575 
        \\ 
        \hline Agent 3 & 0.0000 & 0.7390 & 0.8295 & 0.9195 & 0.9120 & 0.9015  
        \\ 
        \hline Agent 4 & 0.0000 & 0.2720 & 0.3875 & 0.4705 & 0.3915 & 0.4390 
        \\ 
        \hline Agent 5 & 0.0000 & 0.5610 & 0.6570 & 0.7215 & 0.7105 & 0.7160 
        \\ 
        \hline 
    \end{tabular}
    \caption{Validation accuracy of each agent after 1, 10, 20, 30, 40 and 50 rounds, with $\lambda=0.87$ and non-IID data distribution.}\label{tab:NoIID:accuracy}
\end{table}

\begin{table}[!h]
    \centering
    \begin{tabular}{|c|c|c|c|c|c|c|} \hline 
        & \multicolumn{6}{|c|}{\cellcolor{lightgray}\textbf{Round} $t$}
        \\
        \hline \cellcolor{lightgray}\textbf{Validation F1 score} & $t=1$ & $t=10$ & $t=20$ & $t=30$ & $t=40$ & $t=50$
        \\ 
        \hline \hline Agent 1 & 0.3239 & 0.1531 & 0.1601 & 0.1531 & 0.1633 & 0.1736 
        \\ 
        \hline Agent 2 & 0.0000 & 0.0783 & 0.1211 & 0.1683 & 0.1745 & 0.1692 
        \\ 
        \hline Agent 3 & 0.0000 & 0.1884 & 0.2015 & 0.2129 & 0.2726 & 0.2371 
        \\ 
        \hline Agent 4 & 0.0000 & 0.0854 & 0.1227 & 0.1228 & 0.1126 & 0.1209 
        \\ 
        \hline Agent 5 & 0000 & 0.1392 & 0.1550 & 0.1649 & 0.1630 & 0.1620 
        \\ 
        \hline        
    \end{tabular}
    \caption{Validation F1 score of each agent after 1, 10, 20, 30, 40 and 50 rounds, with $\lambda=0.87$ and non-IID data distribution.}\label{tab:NoIID:F1:score}
\end{table}

\begin{figure}[!h]
    \centering
    \includegraphics[width=1\linewidth]{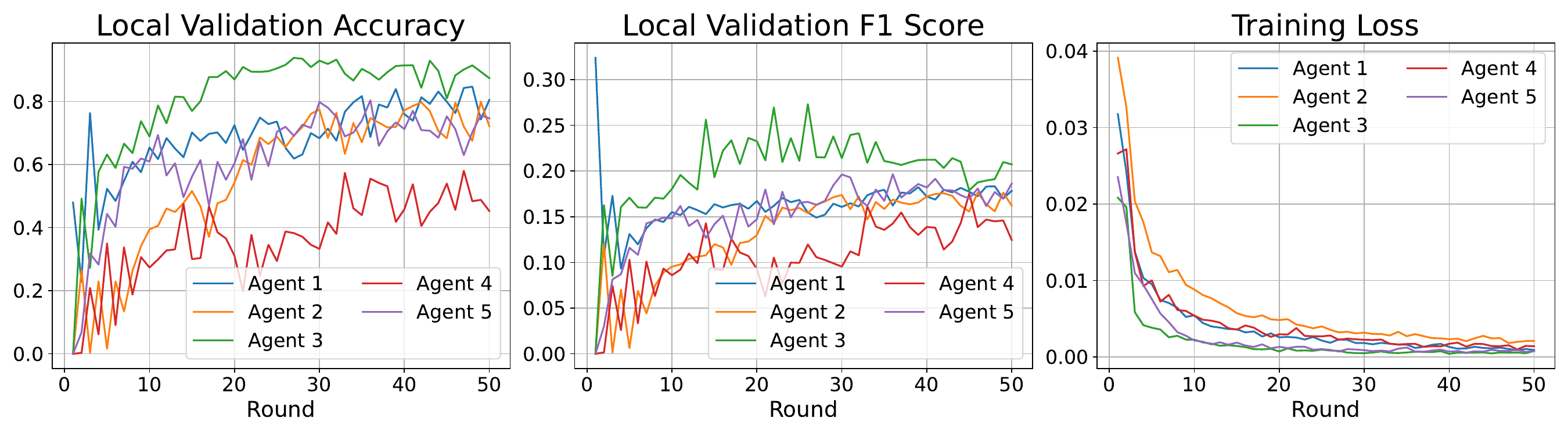}
    \caption{Evolution over 50 rounds of the agents' training performances with $\lambda=0.87$ and non-IID data distribution.}
    \label{fig:exp2:lambda:087}
\end{figure}

We can see how, this time, the individual performances of each agent are quite heterogeneous. At the same time, these performances tend to stabilize as the number of rounds increases and, in particular, each agent achieves a F1 score of around 0.2 at the end of the training. These values are in accordance with the data distribution, since the agents are all training over one fifth of the labels. Despite this behavior, the global model is capable of achieving 0.8056 accuracy and 0.7999 F1 score on the test dataset after 50 rounds. 

Finally, in Table \ref{tab:NoIID:lambda} we collect the test metrics after 50 rounds for different values of $\lambda\in [0,1)$. 

\begin{table}[!h]
    \centering
    \begin{tabular}{|c|c|c|c|c|c|c|} \hline 
        & \cellcolor{lightgray} $\lambda=0.00$ & \cellcolor{lightgray} $\lambda=0.25$ & \cellcolor{lightgray} $\lambda=0.50$ & \cellcolor{lightgray} $\lambda=0.65$ & \cellcolor{lightgray} $\lambda=0.75$ & \cellcolor{lightgray} $\lambda=0.87$
        \\ 
        \hline \hline \cellcolor{lightgray} \textbf{Test accuracy} & 0.6600 & 0.6943 & 0.7771 & 0.7212 & 0.7505 & 0.8056
        \\ 
        \hline \cellcolor{lightgray} \textbf{Test F1 score} & 0.6389 & 0.6820 & 0.7755 & 0.6919 & 0.7366 & 0.7999
        \\ 
        \hline 
    \end{tabular}
    \caption{Test accuracy and F1 score for different values of $\lambda$ after 50 rounds with non-IID data distribution.}
    \label{tab:NoIID:lambda}
\end{table}

Here, we can observe a completely different behavior than the IID case before (see Table \ref{tab:IID:lambda}). In particular, we notice how a larger $\lambda$ improves the training performances.

We believe this is consistent with our observations so far. In fact, when the data distribution is non-IID, agent drift becomes one of the main threats to overall training performance. If an agent overfits to its local dataset, which represents only a partial view of the underlying distribution, it may share misleading updates that ultimately harm the global model. Setting a large value of $\lambda$ helps mitigate this issue by reducing the weight of local training and shifting more influence to the coordinator. In doing so, the coordinator can guide the training process toward acceptable (albeit suboptimal) outcomes.

\section{Conclusions and open problems}\label{section:Conclusions}

In this article, we introduced a generalized multi-objective framework for DL, where several agents and a central coordinator jointly influence the training process via distinct objective functions. Our formulation allows for flexible integration of global structural preferences into the decentralized loop by means of scalarization techniques, which balance local performances with global alignment.

We proposed a scalarized optimization strategy that generalizes standard averaging-based algorithms and derived theoretical convergence guarantees under convexity and smoothness assumptions. Experiments on the MNIST dataset, both in IID and non-IID settings, showed how tuning the scalarization weight can shape learning dynamics, with clear implications for fairness, robustness, and personalization.

Our results demonstrate that DL systems can greatly benefit from incorporating multi-objective perspectives, especially when coordination needs to go beyond simple averaging. Still, our analysis has unaddressed several key questions which will be the objective of a future study.

\begin{itemize}
    \item[1.] \textbf{Optimality of the parameter $\lambda$.} In problem \eqref{weighted:sum:problem}, the parameter $\lambda \in [0,1)$ is introduced to balance the objectives of local agents with the coordinator's preferences. Our experiments reveal that the influence of this parameter is nuanced. Specifically, in the IID setting, increasing $\lambda$ leads to a mild degradation in training performance. However, in the non-IID case, this trend is reversed: a larger $\lambda$ proves beneficial in mitigating agent drift. These observations suggest that the optimal choice of $\lambda$ may depend heavily on the underlying data distribution, and that there could exist a value of $\lambda$ that yields the best overall performance. Determining whether such an optimal value exists, and characterizing it theoretically, remains a challenging question that lies beyond the scope of this study.
    \item[2.] \textbf{Non}-\textbf{smooth coordinator}'\textbf{s objectives.} In our work we have considered only the case in which the coordinator's objectives are smooth (see assumption \ref{ass:A1}). However, this is not the case in many real-life scenarios. In particular, the $L^1$-norm has proven highly effective in for inducing sparsity and enhancing intrepretability, yet its integration into decentralized and scalarized formulations poses significant challenges, such as lack of differentiability, difficulties in computing subgradients across agents, and the design of stable and convergent algorithms under stochastic updates. Addressing these issues requires extending the current theoretical tools, possibly by leveraging techniques from non-smooth analysis or proximal optimization. This remains an open and promising area for future research.
    \item[3.] \textbf{Chebyshev scalarization.} An alternative way to find weakly Pareto optimal solutions of \eqref{Multi-objective:problem} is to employ the so-called \textit{Chebyshev scalarization} which consists in optimizing a multi-objective function
    \begin{align*}
        H(\Theta)= \left(C_1(\Theta),\ldots, C_M (\Theta),S_1(\Theta),\ldots, S_N(\Theta) \right): \mathcal{U} \to \mathbb{R}^{M+N}
    \end{align*}
    by solving a min-max problem 
    \begin{align}\label{Chebyshev:formulation}
        \min_{\Theta\in \mathcal{U}} \max_{\rho \in \Delta} \rho^\top (H(\Theta) - s),
    \end{align}    
    where $s\in \mathbb{R}^{M+N}$ and $\Delta$ be the $(M+N)$-simplex. If \eqref{Chebyshev:formulation} admits a solution, then it is weakly Pareto optimal if all entries of $\rho$ are positive (see for instance \cite{miettinen1999nonlinear}). This formulation provides an alternative to the classical weighted-sum scalarization \eqref{weighted:sum:problem} by ensuring coverage of a broader portion of the Pareto front, especially in non-convex settings. However, solving the min-max problem \eqref{Chebyshev:formulation} in a decentralized and communication-efficient manner remains a significant challenge. In particular, the joint optimization over $\Theta$ and $\rho$ introduces nontrivial coupling between agents and coordinator, which complicates distributed implementation. While some centralized methods have been proposed in the literature for special cases, a fully decentralized algorithm that can solve \eqref{Chebyshev:formulation} under realistic constraints — such as limited communication, partial information, or stochastic gradients — remains an open problem. Addressing this challenge would enable more expressive and adaptive coordination strategies in multi-agent learning systems, particularly when strict prioritization or fairness constraints must be enforced across heterogeneous objectives.
\end{itemize}

\section*{Acknowledgements}
The authors would like to thank to Kang Liu (Institut de Math\'ematiques de Bourgogne), Ziqi Wang (Friedrich-Alexander-Universit\"at Erlangen-N\"urnberg ) and Enrique Zuazua (Friedrich-Alexander-Universit\"at Erlangen-N\"urnberg, University of Deusto and Universidad Aut\'onoma de Madrid) for their helpful feedback and stimulating conversations during the development of this research. 

\bibliographystyle{abbrv}
\bibliography{biblio}

{\appendix\section{Proof of the convergence results}\label{section:proof:of:the:main:result}

We present here the complete proofs for the two main theoretical result of the present paper, namely Theorems \ref{Theorem:Per:Round:Progress} and \ref{main:theorem}.

\begin{proof}[Proof of Theorem \ref{Theorem:Per:Round:Progress}]
Given the length and technicality of the proof, we shall split it into three steps to facilitate the reader's understanding. 

\medskip
\noindent\textbf{Step 1: effects of the parameter's update.} First of all recall that, for all $i\in\inter{M}$, $t\in\inter{T}$ and $k\in\inter{\tau}$, the parameters $\Theta_i^{t,k}$ are updated through the rule 
\begin{align*}
    \Theta_i^{t,k} = \Theta_i^{t,k-1} -\eta \MC{G}_i^{t,k-1},
\end{align*}
where we have defined 
\begin{align*}
    \MC{G}_i^{t,k-1}= g_i^{t,k-1} + \alpha h_i^{t,k-1}
\end{align*}
with $\alpha$ as in \eqref{problem:weighted:Fi}. By taking the average over $i\in\inter{M}$, we then have that 
\begin{align}\label{eq:update}
    \overline{\Theta}^{t,k}=\dfrac{1}{M} \sum_{i=1}^{M} \Theta_i^{t,k}=\dfrac{1}{M} \sum_{i=1}^{M} \left(\Theta_i^{t,k-1} -\eta \MC{G}_i^{t,k-1}\right) = \overline{\Theta}^{t,k-1} - \dfrac{\eta}{M} \sum_{i=1}^{M} \MC{G}_i^{t,k-1}.
\end{align}
Using \eqref{eq:update} and the parallelogram law, we can therefore rewrite
\begin{align}\label{Proof:Lemma:1:00}
    \left\|\overline{\Theta}^{t,k}\right. &- \left.\overline{\Theta}^{t,k-1}\right\|^2 + \left\|\overline{\Theta}^{t,k} - \Theta^T\right\|^2 \notag 
    \\
    &= \left\| \dfrac{\eta}{M} \sum_{i=1}^{M} \MC{G}_i^{t,k-1}\right\|^2 + \left\|\overline{\Theta}^{t,k-1} - \dfrac{\eta}{M} \sum_{i=1}^{M} \MC{G}_i^{t,k-1} - \Theta^T\right\|^2 \notag
    \\
    &= \left\| \dfrac{\sqrt{2}\eta}{M} \sum_{i=1}^{M} \MC{G}_i^{t,k-1}\right\|^2 + \left\|\overline{\Theta}^{t,k-1} - \Theta^T\right\|^2 - \frac{2\eta}{M}\sum_{i=1}^M \left\langle\MC{G}_i^{t,k-1},  \overline{\Theta}^{t,k-1} - \Theta^T\right\rangle.
\end{align}
Moreover, we can compute 
\begin{align*}
    \frac{2\eta}{M} \sum_{i=1}^M \bigg\langle\MC{G}_i^{t,k-1},  & \overline{\Theta}^{t,k-1} - \Theta^T\bigg\rangle - \left\| \dfrac{\sqrt{2}\eta}{M} \sum_{i=1}^{M} \MC{G}_i^{t,k-1}\right\|^2  
    \\
    &= \frac{2\eta}{M}\sum_{i=1}^M \left\langle\MC{G}_i^{t,k-1},  \overline{\Theta}^{t,k-1} - \Theta^T\right\rangle - \left\langle\dfrac{2\eta}{M} \sum_{i=1}^{M} \MC{G}_i^{t,k-1}, \dfrac{\eta}{M} \sum_{i=1}^{M} \MC{G}_i^{t,k-1}\right\rangle  
    \\
    &= \frac{2\eta}{M}\sum_{i=1}^M \left\langle\MC{G}_i^{t,k-1},  \overline{\Theta}^{t,k-1} - \Theta^T - \dfrac{\eta}{M} \sum_{i=1}^{M} \MC{G}_i^{t,k-1}\right\rangle 
\end{align*}
Using again \eqref{eq:update}, we hence have that 
\begin{align}\label{Proof:Lemma:1:01}
    \frac{2\eta}{M} & \sum_{i=1}^M \bigg\langle\MC{G}_i^{t,k-1},  \overline{\Theta}^{t,k-1} - \Theta^T\bigg\rangle - \left\| \dfrac{\sqrt{2}\eta}{M} \sum_{i=1}^{M} \MC{G}_i^{t,k-1}\right\|^2 = \frac{2\eta}{M}\sum_{i=1}^M \bigg\langle\MC{G}_i^{t,k-1},  \overline{\Theta}^{t,k} - \Theta^T\bigg\rangle.  
\end{align}
Combining \eqref{Proof:Lemma:1:01} with \eqref{Proof:Lemma:1:00}, we finally get the identity
\begin{align}\label{Proof:Lemma:1:020}
    \frac{1}{M}\sum_{i=1}^M \bigg\langle\MC{G}_i^{t,k-1}, \overline{\Theta}^{t,k} - \Theta^T\bigg\rangle = \frac{1}{2\eta}\Bigg(\Big\|\overline{\Theta}^{t,k-1} - \Theta^T\Big\|^2 - \Big\|\overline{\Theta}^{t,k}-\overline{\Theta}^{t,k-1}\Big\|^2 - \Big\|\overline{\Theta}^{t,k} - \Theta^T\Big\|^2\Bigg).
\end{align}

\medskip
\noindent\textbf{Step 2: estimates for the objective functions.} Notice that, by Assumptions \ref{ass:A1}-\ref{ass:A2}, the functions $\{F_i\}_{i=1}^M$ are all convex and $L$-smooth with $L$ given by \eqref{eq:L_def}. Then, for each $i\in\inter{M}$, we have 
\begin{align*}
    F_i(\overline{\Theta}^{t,k}) \leq &\; F_i(\Theta_i^{t,k-1}) + \left\langle \nabla F_i(\Theta_i^{t,k-1}) , \overline{\Theta}^{t,k} - \Theta_i^{t,k-1} \right\rangle 
    + \frac{L}{2} \left\|\overline{\Theta}^{t,k} - \Theta_i^{t,k-1}\right\|^2 
    \\
    \leq &\; F_i(\Theta^{t,k-1}) + \left\langle \nabla F_i(\Theta_i^{t,k-1}) , \Theta^T - \Theta_i^{t,k-1} \right\rangle 
    \\
    &+ \left\langle \nabla F_i(\Theta_i^{t,k-1}) , \overline{\Theta}^{t,k} - {\Theta}^T \right\rangle  
    + \frac{L}{2} \left\|\overline{\Theta}^{t,k} - \Theta_i^{t,k-1}\right\|^2 
    \\
    \leq &\; F_i(\Theta^T) + \left\langle \nabla F_i(\Theta_i^{t,k-1}), \overline{\Theta}^{t,k}-\Theta^T \right\rangle + L \left\|\overline{\Theta}^{t,k} - \overline{\Theta}^{t,k-1}\right\|^2 + L \left\|\Theta_i^{t,k-1}-\overline{\Theta}^{t,k-1}\right\|^2, 
\end{align*}
that is
\begin{align}\label{Proof:Lemma:1:02}
    F_i(\overline{\Theta}^{t,k}) - F_i(\Theta^T) \leq \left\langle \nabla F_i(\Theta_i^{t,k-1}), \overline{\Theta}^{t,k}-\Theta^T \right\rangle + L \left\|\overline{\Theta}^{t,k} - \overline{\Theta}^{t,k-1}\right\|^2 + L \left\|\Theta_i^{t,k-1}-\overline{\Theta}^{t,k-1}\right\|^2. 
\end{align}
Set 
\begin{align*}
    F(\cdot):=\frac{1}{M}\sum_{i=1}^M F_i(\cdot)
\end{align*}
By taking the average over $i\in\inter{M}$ in \eqref{Proof:Lemma:1:02}, we deduce that
\begin{align*}
    F (\overline{\Theta}^{t,k}) - F(\Theta^T) \leq& \dfrac{1}{M} \sum_{i=1}^{M} \left \langle \nabla F_i(\Theta_i^{t,k-1}), \overline{\Theta}^{t,k}-\Theta^T \right\rangle + \dfrac{L}{M} \sum_{i=1}^{M} \Big\|\Theta_i^{t,k-1} - \overline{\Theta}^{t,k-1}\Big\|^2 + L \Big\| \overline{\Theta}^{t,k} - \overline{\Theta}^{t,k-1}\Big\|^2  
    \\
    = & \dfrac{1}{M} \sum_{i=1}^{M} \left \langle \nabla F_i(\Theta_i^{t,k-1})-\MC{G}_i^{t,k-1}, \overline{\Theta}^{t,k} - \Theta^T \right\rangle 
    \\
    &+ \dfrac{1}{M} \sum_{i=1}^{M}\left \langle \MC{G}_i^{t,k-1}, \overline{\Theta}^{t,k} - \Theta^T \right\rangle  + \dfrac{L}{M} \sum_{i=1}^{M} \Big\|\Theta_i^{t,k-1} - \overline{\Theta}^{t,k-1}\Big\|^2 + L\Big\| \overline{\Theta}^{t,k} - \overline{\Theta}^{t,k-1}\Big\|^2. 
\end{align*}
Replacing \eqref{Proof:Lemma:1:020} in this last estimate, we then get
\begin{align}\label{Proof:Lemma:1:03}
    F (\overline{\Theta}^{t,k}) - F(\Theta^T) \leq & \dfrac{1}{M} \sum_{i=1}^{M} \left \langle \nabla F_i(\Theta_i^{t,k-1})-\MC{G}_i^{t,k-1}, \overline{\Theta}^{t,k}-\Theta^T \right\rangle \notag 
    \\
    &+ \frac{1}{2\eta}\Bigg(\Big\|\overline{\Theta}^{t,k-1} - \Theta^T\Big\|^2 - \Big\|\overline{\Theta}^{t,k}-\overline{\Theta}^{t,k-1}\Big\|^2 - \Big\|\overline{\Theta}^{t,k} - \Theta^T\Big\|^2\Bigg) \notag
     \\
    &+ \dfrac{L}{M} \sum_{i=1}^{M} \Big\|\Theta_i^{t,k-1} - \overline{\Theta}^{t,k-1}\Big\|^2 + L\Big\| \overline{\Theta}^{t,k} - \overline{\Theta}^{t,k-1}\Big\|^2.
\end{align}

\medskip
\noindent\textbf{Step 3: conclusion.} We shall now take the conditional expectation on the $\sigma$-field $\mathcal{F}^{t,k-1}$ of each term in \eqref{Proof:Lemma:1:03}. First of all, we can rewrite
\begin{align*}
    \mathbb{E} \Bigg[ \dfrac{1}{M} \sum_{i=1}^{M} & \left\langle \nabla F_i(\Theta_i^{t,k}) - \MC{G}_i^{t,k}, \overline{\Theta}^{t,k} - \Theta^T \right\rangle \bigg| \mathcal{F}^{t,k-1}  \Bigg]
    \\
    =&\, \mathbb{E} \left[ \dfrac{1}{M} \sum_{i=1}^{M} \left\langle \nabla F_i(\Theta_i^{t,k}) - \MC{G}_i^{t,k}, \overline{\Theta}^{t,k} - \overline{\Theta}^{t,k-1} \right\rangle \bigg| \mathcal{F}^{t,k-1}  \right]
    \\
    &+ \mathbb{E} \left[ \dfrac{1}{M} \sum_{i=1}^{M} \left\langle \nabla F_i(\Theta_i^{t,k}) - \MC{G}_i^{t,k}, \overline{\Theta}^{t,k-1} - \Theta^T \right\rangle \bigg| \mathcal{F}^{t,k-1}  \right].
\end{align*}
Moreover, thanks to Assumption \ref{ass:A3} (in particular, condition \eqref{ass:A3:cond:01}), we have that 
\begin{align*}
    \mathbb{E} \Bigg[ \dfrac{1}{M} \sum_{i=1}^{M} & \left\langle \nabla F_i(\Theta_i^{t,k}) - \MC{G}_i^{t,k}, \overline{\Theta}^{t,k-1} - \Theta^T \right\rangle \bigg| \mathcal{F}^{t,k-1}  \Bigg]
    \\
    =& \dfrac{1}{M} \sum_{i=1}^{M}\mathbb{E} \left[\left\langle \nabla F_i(\Theta_i^{t,k}) - \MC{G}_i^{t,k}, \overline{\Theta}^{t,k-1} - \Theta^T \right\rangle \bigg| \mathcal{F}^{t,k-1}  \right]
    \\
    =& \dfrac{1}{M} \sum_{i=1}^{M}\left\langle \mathbb{E} \left[\nabla F_i(\Theta_i^{t,k}) - \MC{G}_i^{t,k}\bigg| \mathcal{F}^{t,k-1}  \right], \overline{\Theta}^{t,k-1} - \Theta^T \right\rangle = 0.
\end{align*}
Hence, 
\begin{align}\label{Proof:Lemma:1:04}
    \mathbb{E} \Bigg[ \dfrac{1}{M} \sum_{i=1}^{M} & \left\langle \nabla F_i(\Theta_i^{t,k}) - \MC{G}_i^{t,k}, \overline{\Theta}^{t,k} - \Theta^T \right\rangle \bigg| \mathcal{F}^{t,k-1}  \Bigg] \notag
    \\
    =&\, \mathbb{E} \left[ \dfrac{1}{M} \sum_{i=1}^{M} \left\langle \nabla F_i(\Theta_i^{t,k}) - \MC{G}_i^{t,k}, \overline{\Theta}^{t,k} - \overline{\Theta}^{t,k-1} \right\rangle \bigg| \mathcal{F}^{t,k-1}  \right].
\end{align}
Applying Young's inequality, we then get
\begin{align}\label{Proof:Lemma:1:07}
    \mathbb{E} \Bigg[ &\dfrac{1}{M} \sum_{i=1}^{M} \left\langle \nabla F_i(\Theta_{i}^{t,k}) - \MC{G}_i^{t,k} , \overline{\Theta}^{t,k} - \Theta^T \right\rangle \bigg| \mathcal{F}^{t,k-1} \Bigg] \notag
    \\
    &= \mathbb{E} \Bigg[\dfrac{1}{M} \sum_{i=1}^{M} \left\langle \nabla F_i(\Theta_{i}^{t,k}) - \MC{G}_i^{t,k} , \overline{\Theta}^{t,k} - \overline{\Theta}^{t,k-1} \right\rangle \bigg| \mathcal{F}^{t,k-1} \Bigg] \notag
    \\
    &\leq \eta \mathbb{E} \left[\left\|\dfrac{1}{M} \sum_{i=1}^{M} \nabla F_i(\Theta_i^{t,k}) - \MC{G}_i^{t,k} \right\|^2 \bigg| \mathcal{F}^{t,k-1} \right] + \dfrac{1}{4\eta} \mathbb{E}\left[ \left\|\overline{\Theta}^{t,k} - \overline{\Theta}^{t,k-1}\right\|^2 \bigg| \mathcal{F}^{t,k-1}  \right]. 
\end{align}
Now, by means of \eqref{problem:weighted:Fi} and condition \eqref{ass:A3:cond:02}, we can easily estimate 
\begin{align*}
    \mathbb{E} \left[\left\|\dfrac{1}{M} \sum_{i=1}^{M}\nabla F_i(\Theta_i^{t,k}) - \MC{G}_i^{t,k} \right\|^2 \bigg| \mathcal{F}^{t,k-1} \right]\leq \frac{2(\sigma_C^2+\alpha^2N^2\sigma_S^2)}{M} = \frac{2\Sigma}{M}.
\end{align*}
Combining this last inequality with \eqref{Proof:Lemma:1:03}, \eqref{Proof:Lemma:1:04} and \eqref{Proof:Lemma:1:07}, we then see that 
\begin{align*}
    \mathbb{E} \Bigg[F(\overline{\Theta}^{t,k}) - F(\Theta^T) \bigg| \mathcal{F}^{t,k-1} \Bigg] \leq & \dfrac{2\Sigma\eta}{M} + \left(L - \dfrac{1}{4\eta} \right) \mathbb{E} \left[ \left\|\overline{\Theta}^{t,k} - \overline{\Theta}^{t,k-1} \right\|^2 \bigg| \mathcal{F}^{t,k-1} \right] 
    \\
    & +\dfrac{1}{2\eta} \mathbb{E} \Bigg[\left\| \overline{\Theta}^{t,k-1} - \Theta^T\right\|^2 - \left\|\overline{\Theta}^{t,k} - \Theta^T\right\|^2 \bigg| \mathcal{F}^{t,k-1} \Bigg] 
    \\
    &+\dfrac{L}{M} \sum_{i=1}^{M} \mathbb{E} \Bigg[ \left\| \Theta_i^{t,k-1} - \overline{\Theta}^{t,k-1} \right\|^2 \bigg| \mathcal{F}^{t,k-1} \Bigg].
\end{align*}
Therefore, since $\eta$ fulfills \eqref{eq:LR}, we can estimate 
\begin{align*}
    \mathbb{E} \Bigg[F(\overline{\Theta}^{t,k}) - F(\Theta^T) \bigg| \mathcal{F}^{t,k-1} \Bigg] \leq & \dfrac{2\Sigma\eta}{M} +\dfrac{1}{2\eta} \mathbb{E} \Bigg[\left\| \overline{\Theta}^{t,k-1} - \Theta^T\right\|^2 - \left\|\overline{\Theta}^{t,k} - \Theta^T\right\|^2 \bigg| \mathcal{F}^{t,k-1} \Bigg] 
    \\
    &+\dfrac{L}{M} \sum_{i=1}^{M} \mathbb{E} \Bigg[ \left\| \Theta_i^{t,k-1} - \overline{\Theta}^{t,k-1} \right\|^2 \bigg| \mathcal{F}^{t,k-1} \Bigg].
\end{align*}
Now, notice that 
\begin{align*}
    \left\| \overline{\Theta}^{t,k-1} - \Theta^T\right\|^2 \in \mathcal{F}^{t,k-1},
\end{align*}
which yields 
\begin{align*}
    \mathbb{E} \Bigg[\left\| \overline{\Theta}^{t,k-1} - \Theta^T\right\|^2 \bigg| \mathcal{F}^{t,k-1} \Bigg] = \left\| \overline{\Theta}^{t,k-1} - \Theta^T\right\|^2
\end{align*}
and
\begin{align*}
    \mathbb{E} \Bigg[F(\overline{\Theta}^{t,k}) - F(\Theta^T) \bigg| \mathcal{F}^{t,k-1} \Bigg] \leq & \dfrac{2\Sigma\eta}{M} +\dfrac{1}{2\eta} \left(\left\| \overline{\Theta}^{t,k-1} - \Theta^T\right\|^2 - \mathbb{E} \Bigg[\left\|\overline{\Theta}^{t,k} - \Theta^T\right\|^2 \bigg| \mathcal{F}^{t,k-1} \Bigg]\right) 
    \\
    &+\dfrac{L}{M} \sum_{i=1}^{M} \mathbb{E} \Bigg[ \left\| \Theta_i^{t,k-1} - \overline{\Theta}^{t,k-1} \right\|^2 \bigg| \mathcal{F}^{t,k-1} \Bigg].
\end{align*}
Moving to the left the first term on the right-hand side of the above inequality, we get
\begin{align}\label{Proof:Lemma:1:08}
    \mathbb{E} \Bigg[F(\overline{\Theta}^{t,k}) - F(\Theta^T) \bigg| \mathcal{F}^{t,k-1} \Bigg] +& \dfrac{1}{2\eta} \left( \mathbb{E} \left[ \left\|\overline{\Theta}^{t,k} - \Theta^T \right\|^2 \bigg| \mathcal{F}^{t,k-1} \right] - \left\|\overline{\Theta}^{t,k-1} - \Theta^T\right\|^2 \right) \notag
    \\
    &\leq \dfrac{2\Sigma\eta}{M} + \dfrac{L}{M} \sum_{i=1}^{M} \mathbb{E} \Bigg[ \left\|\Theta_i^{t,k-1} - \overline{\Theta}^{t,k-1}\right\|^2 \bigg| \mathcal{F}^{t,k} \Bigg].    
\end{align}
Multiplying by $\tau^{-1}$ in \eqref{Proof:Lemma:1:08} and summing from $k=1$ to $k=\tau$, we then obtain 
\begin{align*}
    \mathbb{E} \Bigg[\dfrac{1}{\tau} \sum_{k=1}^{\tau} \left( F(\overline{\Theta}^{t,k}) - F(\Theta^T) \right) \bigg| \mathcal{F}^{t,k-1} \Bigg] +& \dfrac{1}{2\eta \tau} \left( \mathbb{E}\left[ \left\|\overline{\Theta}^{t,\tau} - \Theta^T\right\|^2 \bigg| \mathcal{F}^{t,\tau-1} \right] - \left\|\overline{\Theta}^{t,0} - \Theta^T\right\|^2 \right) \notag
    \\
    &\leq \dfrac{2\Sigma\eta}{M} +\dfrac{L}{M\tau} \sum_{k=1}^{\tau} \sum_{i=1}^{M} \mathbb{E} \left[\left\|\Theta_i^{t,k-1} - \overline{\Theta}^{t,k-1}\right\|^2 \bigg| \mathcal{F}^{t,k} \right].
\end{align*}

From this last inequality, \eqref{estimate:Lemma:1} follows immediately using the Law of Iterated Expectations together with the clock property.
\end{proof}

The proof of our second main result Theorem \ref{main:theorem} requires the following technical lemma quantifying the so-called agent's drift, that is, the divergence of each agent's updates due to differences in their data distributions, which can hinder global model convergence and reduce overall performance.

\begin{lemma}\label{lemma:client_drift}
Suppose the assumptions \ref{ass:A1}-\ref{ass:A2}-\ref{ass:A3} are fulfilled and take the learning rate $\eta$ satisfying \eqref{eq:LR}. Then, for all $i\in\inter{M}$, $k\in\inter{\tau}$ and $t\in\inter{T-1}\cup\{0\}$, the following upper bound holds
\begin{align}\label{client:drift:inequality}
    \mathbb{E} \left[ \|\Theta_i^{t,k} - \overline{\Theta}^{t,k}\|^2 \Big| \mathcal{F}^{t,0} \right] \leq 10\tau^2 \eta^2 \zeta^2 + 4\tau \eta^2\Sigma,
\end{align}
with $\Sigma$ given by \eqref{eq:Sigma_def}.
\end{lemma}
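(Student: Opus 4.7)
The plan is to exploit that Algorithm \ref{algo:DL} re-initializes all agents to the same starting parameter at the beginning of each round, so $\Theta_i^{t,0}=\overline{\Theta}^{t,0}$ and the drift vanishes at $k=0$. First I would unroll the agent update rule $\Theta_i^{t,k}=\Theta_i^{t,k-1}-\eta\MC{G}_i^{t,k-1}$ together with \eqref{eq:update}, to obtain the telescoped representation
$$\Theta_i^{t,k}-\overline{\Theta}^{t,k} \;=\; -\eta\sum_{l=0}^{k-1}\bigl(\MC{G}_i^{t,l}-\overline{\MC{G}}^{t,l}\bigr),\qquad \overline{\MC{G}}^{t,l}:=\frac{1}{M}\sum_{j=1}^M \MC{G}_j^{t,l}.$$
This reduces the task to controlling the squared norm of a stochastic sum over the local iterations $l=0,\ldots,k-1$, a setup analogous to the telescoping already used for Theorem \ref{Theorem:Per:Round:Progress}.

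Then I would split each summand into a mean-zero stochastic piece and a deterministic drift piece by adding and subtracting $\nabla F_i(\Theta_i^{t,l})-\frac{1}{M}\sum_j \nabla F_j(\Theta_j^{t,l})$, separating the two contributions through $\|a+b\|^2\leq 2\|a\|^2+2\|b\|^2$. For the noise piece, Assumption \ref{ass:A2} gives conditionally centered increments of variance at most $\Sigma$ (as in the estimate $2\Sigma/M$ used for Theorem \ref{Theorem:Per:Round:Progress}); cross-terms across distinct $l$ vanish in expectation by a martingale orthogonality argument on the filtration $\{\MC{F}^{t,l}\}_l$, which collapses the stochastic part into a per-step variance sum contributing exactly the $4\tau\eta^2\Sigma$ term in \eqref{client:drift:inequality}. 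For the deterministic piece, I would use the triangle decomposition
$$\nabla F_i(\Theta_i^{t,l})-\tfrac{1}{M}\textstyle\sum_j\nabla F_j(\Theta_j^{t,l}) \;=\; \bigl[\nabla F_i(\Theta_i^{t,l})-\nabla F_i(\overline{\Theta}^{t,l})\bigr] + \bigl[\nabla F_i(\overline{\Theta}^{t,l})-\nabla F(\overline{\Theta}^{t,l})\bigr] + \tfrac{1}{M}\textstyle\sum_j\bigl[\nabla F_j(\overline{\Theta}^{t,l})-\nabla F_j(\Theta_j^{t,l})\bigr],$$
controlling the two outer bracketed gradient differences via $L$-smoothness (Assumption \ref{ass:A1}) and the middle term by $\zeta$ (Assumption \ref{ass:A3}). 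Applying Cauchy-Schwarz $\|\sum_{l=0}^{k-1}a_l\|^2\leq k\sum_l\|a_l\|^2$ to the outer sum then produces a recursive inequality of the form
$$U^k \;\leq\; 4\tau\eta^2\Sigma + C_1\,\tau^2\eta^2\zeta^2 + C_2\,L^2\tau\eta^2\sum_{l=0}^{k-1} U^l,$$
where $U^k:=\max_i\mathbb{E}[\|\Theta_i^{t,k}-\overline{\Theta}^{t,k}\|^2\mid\MC{F}^{t,0}]$.

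The final step is to close this recursion using the step-size restriction $\eta\leq 1/(4L)$ from \eqref{eq:LR}, which makes the coefficient $C_2 L^2\tau\eta^2$ small enough that a direct induction on $k$ (or a discrete Gr\"onwall-type argument) returns the announced bound $10\tau^2\eta^2\zeta^2+4\tau\eta^2\Sigma$. The main obstacle I expect is the bookkeeping required to arrive at the explicit constants $10$ and $4$: these are quite sensitive to the choices of splitting constants in the Young and Cauchy-Schwarz inequalities and to the tightness of the smoothness estimates, and obtaining the dominant $\tau^2$ factor in the $\zeta^2$ term without leaking an additional $\tau$-overhead through the Gr\"onwall step is the delicate part of the argument, relying crucially on the condition $\eta\leq 1/(4L)$.
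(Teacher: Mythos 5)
Your overall route---telescoping from the common re-initialization point $\Theta_i^{t,0}=\overline{\Theta}^{t,0}$, splitting into a martingale noise part and a deterministic drift part, then closing a recursion on $U^k$---is a legitimate and standard alternative to the paper's argument, which instead runs a one-step recursion on the \emph{pairwise} differences $\|\Theta_i^{t,k}-\Theta_{i+1}^{t,k}\|^2$ and only passes to $\|\Theta_i^{t,k}-\overline{\Theta}^{t,k}\|^2$ at the very end via convexity of the squared norm. The noise bookkeeping (conditional unbiasedness, martingale orthogonality across local steps, per-step variance of order $\Sigma$) is fine in spirit.

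The genuine gap is in your final Gr\"onwall step. Your recursion has the form $U^k\le A+C_2L^2\tau\eta^2\sum_{l<k}U^l$, whose solution carries a factor $(1+C_2L^2\tau\eta^2)^k\le\exp\!\big(C_2L^2\tau^2\eta^2\big)$ over $k\le\tau$ local steps. Under the stated hypothesis $\eta\le 1/(4L)$ this exponent is $O(\tau^2)$, not $O(1)$: condition \eqref{eq:LR} does \emph{not} make the coefficient small enough, and the induction does not return a bound of the claimed form without an exponential-in-$\tau^2$ loss. To close your recursion you would need the stronger restriction $\eta\lesssim 1/(L\tau)$ (which the choice \eqref{LR:main:thm} in Theorem \ref{main:theorem} does satisfy, but the lemma is asserted under \eqref{eq:LR} alone). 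The paper sidesteps this by using convexity, not just smoothness: the cocoercivity inequality $\langle\nabla F(x)-\nabla F(y),x-y\rangle\ge\frac{1}{L}\|\nabla F(x)-\nabla F(y)\|^2$ lets the $-2\eta\langle\cdot,\cdot\rangle$ cross term absorb the $3\eta^2\|\nabla F(\Theta_i^{t,k})-\nabla F(\Theta_{i+1}^{t,k})\|^2$ contribution whenever $3\eta\le 2/L$, so no $L^2\|\Delta\|^2$ term ever enters the recursion; the only per-step amplification is the factor $(1+1/\tau)$ produced by Young's inequality on the $\zeta$ term, which compounds to a bounded constant over $\tau$ steps. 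Your argument, which controls the gradient differences purely through $L$-smoothness, discards exactly the structure that makes \eqref{eq:LR} sufficient; either import that cancellation or strengthen the step-size hypothesis.
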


\begin{proof}
Also in this case, to help the reader's understanding, we split the proof into two Steps. 

\medskip
\noindent\textbf{Step 1.} First of all, recall that we have defined 
\begin{align*}
    \MC{G}_i(\Theta):=g_i(\Theta) + \alpha \sum_{j=1}^{N} h_j(\Theta), \quad\text{ for all } i\in\inter{M}\text{ and } \Theta\in\mathcal{U}.
\end{align*}
Let now $i\in\inter{M-1}$. Then, by stochastic iterates \eqref{eq:update_algo}, and since 
\begin{align*}
    \left\|\Theta_i^{t,k} - \Theta_{i+1}^{t,k}\right\|^2\in \mathcal{F}^{(t,k)},     
\end{align*}
we have 
\begin{align}\label{Proof:Lemma:2:01}
    \mathbb{E} \left[ \left\|\Theta_i^{t,k+1} - \Theta_{i+1}^{t,k+1}\right\|^2 \big| \mathcal{F}^{t,k} \right] =& \mathbb{E}\left[\left\|\Theta_i^{t,k} - \Theta_{i+1}^{t,k} - \eta \left(\MC{G}_i(\Theta_i^{t,k}) - \MC{G}_{i+1}(\Theta_{i+1}^{t,k})\right)\right\|^2 \Big| \mathcal{F}^{(t,k)} \right] \notag
    \\
    =& \left\|\Theta_{i}^{t,k} - \Theta_{i+1}^{t,k}\right\|^2 + \eta^2 \mathbb{E} \left[ \left\|\MC{G}_i(\Theta_i^{t,k}) - \MC{G}_{i+1} (\Theta_{i+1}^{t,k})\right\|^2 \Big| \mathcal{F}^{t,k} \right] \notag 
    \\
    &-2\eta \mathbb{E}\bigg[ \left\langle  \MC{G}_i(\Theta_i^{t,k}) - \MC{G}_i(\Theta_{i+1}^{t,k}) , \Theta_i^{t,k} - \Theta_{i+1}^{t,k} \right\rangle \Big| \mathcal{F}^{t,k} \bigg].
\end{align}
Now, using the triangle inequality, we can split
\begin{align*}
    \eta^2\mathbb{E} \bigg[ & \left\|\MC{G}_i(\Theta_i^{t,k}) - \MC{G}_{i+1} (\Theta_{i+1}^{t,k})\right\|^2 \Big| \mathcal{F}^{t,k} \bigg] 
    \\
    =&\eta^2\mathbb{E} \left[ \left\|\MC{G}_i(\Theta_i^{t,k}) - \nabla F_i(\Theta_i^{t,k}) - \left(\MC{G}_{i+1} (\Theta_{i+1}^{t,k})-\nabla F_{i+1}(\Theta_{i+1}^{t,k})\right)\right\|^2 \Big| \mathcal{F}^{t,k} \right]
    \\
    &+\eta^2\mathbb{E} \left[ \left\|\nabla F_i(\Theta_i^{t,k}) - \nabla F_{i+1}(\Theta_{i+1}^{t,k})\right\|^2 \Big| \mathcal{F}^{t,k} \right]
    \\
    &+2\eta^2\mathbb{E} \bigg[ \left\langle \MC{G}_i(\Theta_i^{t,k}) - \nabla F_i(\Theta_i^{t,k}), \nabla F_i(\Theta_i^{t,k}) - \nabla F_{i+1}(\Theta_{i+1}^{t,k})\right\rangle \Big| \mathcal{F}^{t,k} \bigg]
    \\
    &-2\eta^2\mathbb{E} \bigg[\left\langle\MC{G}_{i+1} (\Theta_{i+1}^{t,k})-\nabla F_{i+1}(\Theta_{i+1}^{t,k}), \nabla F_i(\Theta_i^{t,k}) - \nabla F_{i+1}(\Theta_{i+1}^{t,k})\right\rangle|^2 \Big| \mathcal{F}^{t,k} \bigg].
\end{align*}
Moreover, thanks to \eqref{ass:A3:cond:01}, the last two terms in the last inequality vanish, so that we have 
\begin{align*}
    \eta^2\mathbb{E} \bigg[ & \left\|\MC{G}_i(\Theta_i^{t,k}) - \MC{G}_{i+1} (\Theta_{i+1}^{t,k})\right\|^2 \Big| \mathcal{F}^{t,k} \bigg] 
    \\
    =&\eta^2\mathbb{E} \left[ \left\|\MC{G}_i(\Theta_i^{t,k}) - \nabla F_i(\Theta_i^{t,k}) - \left(\MC{G}_{i+1} (\Theta_{i+1}^{t,k})-\nabla F_{i+1}(\Theta_{i+1}^{t,k})\right)\right\|^2 \Big| \mathcal{F}^{t,k} \right]
    \\
    &+\eta^2\mathbb{E} \left[ \left\|\nabla F_i(\Theta_i^{t,k}) - \nabla F_{i+1}(\Theta_{i+1}^{t,k})\right\|^2 \Big| \mathcal{F}^{t,k} \right]
\end{align*}
and \eqref{Proof:Lemma:2:01} becomes
\begin{align}\label{Proof:Lemma:2:02}
    \mathbb{E} \bigg[ \Big\|\Theta_i^{t,k+1} &- \Theta_{i+1}^{t,k+1}\Big\|^2 \big| \mathcal{F}^{t,k} \bigg] \notag
    \\
    =& \left\|\Theta_{i}^{t,k} - \Theta_{i+1}^{t,k}\right\|^2 + \eta^2\mathbb{E} \left[ \left\|\nabla F_i(\Theta_i^{t,k}) - \nabla F_{i+1}(\Theta_{i+1}^{t,k})\right\|^2 \Big| \mathcal{F}^{t,k} \right] \notag 
    \\
    &+\eta^2\mathbb{E} \left[ \left\|\MC{G}_i(\Theta_i^{t,k}) - \nabla F_i(\Theta_i^{t,k}) - \left(\MC{G}_{i+1} (\Theta_{i+1}^{t,k})-\nabla F_{i+1}(\Theta_{i+1}^{t,k})\right)\right\|^2 \Big| \mathcal{F}^{t,k} \right] \notag
    \\
    &-2\eta \mathbb{E}\bigg[ \left\langle  \MC{G}_i(\Theta_i^{t,k}) - \MC{G}_i(\Theta_{i+1}^{t,k}) , \Theta_i^{t,k} - \Theta_{i+1}^{t,k} \right\rangle \Big| \mathcal{F}^{t,k} \bigg].
\end{align}
In the same way, we have
\begin{align*}
    \mathbb{E}\bigg[\Big\langle  \MC{G}_i(\Theta_i^{t,k}) - \MC{G}_i & (\Theta_{i+1}^{t,k}) , \Theta_i^{t,k} - \Theta_{i+1}^{t,k} \Big\rangle \Big| \mathcal{F}^{t,k} \bigg] 
    \\
    =& \mathbb{E}\bigg[ \left\langle  \MC{G}_i(\Theta_i^{t,k}) - \nabla F_i(\Theta_i^{t,k}) , \Theta_i^{t,k} - \Theta_{i+1}^{t,k} \right\rangle \Big| \mathcal{F}^{t,k} \bigg]
    \\
    &- \mathbb{E}\bigg[ \left\langle  \MC{G}_i(\Theta_{i+1}^{t,k}) - \nabla F_{i+1}(\Theta_i^{t,k}) , \Theta_i^{t,k} - \Theta_{i+1}^{t,k} \right\rangle \Big| \mathcal{F}^{t,k} \bigg]
    \\
    &+ \mathbb{E}\bigg[ \left\langle  \nabla F_i(\Theta_i^{t,k}) - \nabla F_{i+1}(\Theta_{i+1}^{t,k}) , \Theta_i^{t,k} - \Theta_{i+1}^{t,k} \right\rangle \Big| \mathcal{F}^{t,k} \bigg]
    \\
    =& \mathbb{E}\bigg[ \left\langle  \nabla F_i(\Theta_i^{t,k}) - \nabla F_{i+1}(\Theta_{i+1}^{t,k}) , \Theta_i^{t,k} - \Theta_{i+1}^{t,k} \right\rangle \Big| \mathcal{F}^{t,k} \bigg]
    \\
    =& \left\langle  \nabla F_i(\Theta_i^{t,k}) - \nabla F_{i+1}(\Theta_{i+1}^{t,k}) , \Theta_i^{t,k} - \Theta_{i+1}^{t,k} \right\rangle,
\end{align*}
and we get from \eqref{Proof:Lemma:2:03}
\begin{align}\label{Proof:Lemma:2:03}
    \mathbb{E} \bigg[ \Big\|\Theta_i^{t,k+1} &- \Theta_{i+1}^{t,k+1}\Big\|^2 \big| \mathcal{F}^{t,k} \bigg] \notag
    \\
    =& \left\|\Theta_{i}^{t,k} - \Theta_{i+1}^{t,k}\right\|^2 + \eta^2\mathbb{E} \left[ \left\|\nabla F_i(\Theta_i^{t,k}) - \nabla F_{i+1}(\Theta_{i+1}^{t,k})\right\|^2 \Big| \mathcal{F}^{t,k} \right] \notag 
    \\
    &+\eta^2\mathbb{E} \left[ \left\|\MC{G}_i(\Theta_i^{t,k}) - \nabla F_i(\Theta_i^{t,k}) - \left(\MC{G}_{i+1} (\Theta_{i+1}^{t,k})-\nabla F_{i+1}(\Theta_{i+1}^{t,k})\right)\right\|^2 \Big| \mathcal{F}^{t,k} \right] \notag
    \\
    &-2\eta \left\langle  \nabla F_i(\Theta_i^{t,k}) - \nabla F_{i+1}(\Theta_{i+1}^{t,k}) , \Theta_i^{t,k} - \Theta_{i+1}^{t,k} \right\rangle.
\end{align}

\medskip
\noindent\textbf{Step 2.} Using \eqref{ass:A3:cond:02}, we can estimate 
\begin{align*}
    \mathbb{E} & \left[ \left\|\MC{G}_i(\Theta_i^{t,k}) - \nabla F_i(\Theta_i^{t,k}) - \left(\MC{G}_{i+1} (\Theta_{i+1}^{t,k})-\nabla F_{i+1}(\Theta_{i+1}^{t,k})\right)\right\|^2 \Big| \mathcal{F}^{t,k} \right] 
    \\
    &\leq 2\mathbb{E} \left[ \left\|\MC{G}_i(\Theta_i^{t,k}) - \nabla F_i(\Theta_i^{t,k})\right\|^2 \Big| \mathcal{F}^{t,k} \right] + 2\mathbb{E} \left[ \left\|\MC{G}_{i+1} (\Theta_{i+1}^{t,k})-\nabla F_{i+1}(\Theta_{i+1}^{t,k})\right\|^2 \Big| \mathcal{F}^{t,k} \right] 
    \\
    &\leq 4(\sigma_C^2 + \alpha^2N^2\sigma_S^2) = 4\Sigma,
\end{align*}
and we obtain from \eqref{Proof:Lemma:2:03}
\begin{align}\label{Proof:Lemma:2:04}
    \mathbb{E} \bigg[ \Big\|\Theta_i^{t,k+1} - \Theta_{i+1}^{t,k+1}\Big\|^2 \big| \mathcal{F}^{t,k} \bigg] \leq& \left\|\Theta_{i}^{t,k} - \Theta_{i+1}^{t,k}\right\|^2 + \eta^2\mathbb{E} \left[ \left\|\nabla F_i(\Theta_i^{t,k}) - \nabla F_{i+1}(\Theta_{i+1}^{t,k})\right\|^2 \Big| \mathcal{F}^{t,k} \right] \notag 
    \\
    &+4\eta^2\Sigma -2\eta \left\langle  \nabla F_i(\Theta_i^{t,k}) - \nabla F_{i+1}(\Theta_{i+1}^{t,k}) , \Theta_i^{t,k} - \Theta_{i+1}^{t,k} \right\rangle.
\end{align}

Moreover, thanks to the $L$-smooth Assumption \ref{ass:A2}, and using Young's inequality, we can be bound
\begin{align}\label{Proof:Lemma:2:05}    
    -\Big\langle &\nabla F_i(\Theta_i^{t,k}) - \nabla F_{i+1}(\Theta_{i+1}^{t,k}) , \Theta_i^{t,k} - \Theta_{i+1}^{t,k} \Big\rangle \notag
    \\
    =& -\left\langle \nabla F(\Theta_i^{t,k})-\nabla F(\Theta_{i+1}^{t,k}), \Theta_i^{t,k} - \Theta_{i+1}^{t,k} \right\rangle - \left\langle \nabla F_i(\Theta_i^{t,k}) -\nabla F(\Theta_i^{t,k}) , \Theta_i^{t,k} - \Theta_{i+1}^{t,k}  \right\rangle \notag
    \\
    &-\left\langle \nabla F_{i+1}(\Theta_i^{t,k}) - \nabla F(\Theta_{i+1}^{t,k}) , \Theta_{i}^{t,k} - \Theta_{i+1}^{t,k} \right\rangle \notag
    \\
    \leq & - \left\langle \nabla F(\Theta_i^{t,k}) - \nabla F(\Theta_{i+1}^{t,k}) , \Theta_i^{t,k} - \Theta_{i+1}^{t,k}  \right\rangle +2 \zeta \left\|\Theta_i^{t,k} - \Theta_{i+1}^{t,k}\right\| \notag
    \\
    \nonumber 
    \leq & -\dfrac{1}{L} \left\| \nabla F(\Theta_i^{t,k}) - \nabla F(\Theta_{i+1}^{t,k}) \right\|^2 + 2\zeta \left\| \Theta_i^{t,k} - \Theta_{i+1}^{t,k}  \right\|
    \\   
    \leq & -\dfrac{1}{L} \left\| \nabla F(\Theta_i^{t,k}) - \nabla F(\Theta_{i+1}^{t,k}) \right\|^2 + \dfrac{1}{2\eta \tau} \|\Theta_i^{t,k} - \Theta_{i+1}^{t,k}\|^2 + 2\eta \tau \zeta^2.
\end{align}
Substituting \eqref{Proof:Lemma:2:05} into \eqref{Proof:Lemma:2:04}, we then obtain
\begin{align}\label{Proof:Lemma:2:06}
    \mathbb{E} \bigg[ \Big\|\Theta_i^{t,k+1} - \Theta_{i+1}^{t,k+1}\Big\|^2 \big| \mathcal{F}^{t,k} \bigg] \leq& \left(1+\frac{1}{\tau}\right)\left\|\Theta_{i}^{t,k} - \Theta_{i+1}^{t,k}\right\|^2 + 4\eta^2 \tau \zeta^2 +4\eta^2\Sigma \notag
    \\
    &+ \eta^2\mathbb{E} \left[ \left\|\nabla F_i(\Theta_i^{t,k}) - \nabla F_{i+1}(\Theta_{i+1}^{t,k})\right\|^2 \Big| \mathcal{F}^{t,k} \right] \notag 
    \\
    & -\dfrac{2\eta}{L} \left\| \nabla F(\Theta_i^{t,k}) - \nabla F(\Theta_{i+1}^{t,k}) \right\|^2.
\end{align}
In the same manner, we can also bound
\begin{align*}
    \left\|\nabla F_i(\Theta_i^{t,k}) - \nabla F_{i+1}(\Theta_i^{t,k}) \right\|^2 \leq 3 \left\|\nabla F(\Theta_i^{t,k}) - \nabla F(\Theta_{i+1}^{t,k}) \right\|^2 + 6\zeta^2,
\end{align*}
thus obtaining from \eqref{Proof:Lemma:2:06}
\begin{align*}
    \mathbb{E} \bigg[ \Big\|\Theta_i^{t,k+1} - \Theta_{i+1}^{t,k+1}\Big\|^2 \big| \mathcal{F}^{t,k} \bigg] \leq& \left(1+\frac{1}{\tau}\right)\left\|\Theta_{i}^{t,k} - \Theta_{i+1}^{t,k}\right\|^2 + 4\eta^2 \tau \zeta^2 +4\eta^2\Sigma + 6\eta^2\zeta^2 
    \\
    & +\eta\left(3\eta-\frac{2}{L}\right) \left\| \nabla F(\Theta_i^{t,k}) - \nabla F(\Theta_{i+1}^{t,k}) \right\|^2
    \\
    \leq& \left(1+\frac{1}{\tau}\right)\left\|\Theta_{i}^{t,k} - \Theta_{i+1}^{t,k}\right\|^2 + 4\eta^2\Sigma + 10\tau\eta^2\zeta^2 
    \\
    & +\eta\left(3\eta-\frac{2}{L}\right) \left\| \nabla F(\Theta_i^{t,k}) - \nabla F(\Theta_{i+1}^{t,k}) \right\|^2,
\end{align*}
where we have used that $\tau\geq 1$. Finally, since $\eta$ satisfies the condition \eqref{eq:LR}, the last term of the above inequality is negative, and we get
\begin{align*}
    \mathbb{E} \bigg[ \Big\|\Theta_i^{t,k+1} - \Theta_{i+1}^{t,k+1}\Big\|^2 \big| \mathcal{F}^{t,k} \bigg] &\leq \left(1+\frac{1}{\tau}\right)\left\|\Theta_{i}^{t,k} - \Theta_{i+1}^{t,k}\right\|^2 + 4\eta^2\Sigma + 10\eta^2\zeta^2 
    \\
    &= \left(1+\frac{1}{\tau}\right)\mathbb{E}\left[\left\|\Theta_i^{t,k} - \Theta_{i+1}^{t,k}\right\|^2\bigg| \mathcal{F}^{t,k}\right] + 4\eta^2\Sigma + 10\tau\eta^2\zeta^2.
\end{align*}
Iterating, we get that 
\begin{align*}
    \mathbb{E} \bigg[ \Big\|\Theta_i^{t,k+1} - \Theta_{i+1}^{t,k+1}\Big\|^2 \big| \mathcal{F}^{t,0} \bigg] &\leq \frac{\left(1+\frac{1}{\tau}\right)^k-1}{\frac{1}{\tau}}\Big(4\eta^2\Sigma + 10\tau\eta^2\zeta^2\Big) 
    \\
    &= \left[\left(1+\frac{1}{\tau}\right)^k-1\right]\Big(4\tau\eta^2\Sigma + 10\tau^2\eta^2\zeta^2\Big) \leq 4\tau\eta^2\Sigma + 10\tau^2\eta^2\tau\zeta^2.
\end{align*}
From this last estimate, the inequality \eqref{client:drift:inequality} follows thanks to the the convexity of the norm. 
\end{proof}

\begin{proof}
The result follows by combining Theorem \ref{Theorem:Per:Round:Progress} with Lemma \ref{lemma:client_drift}. First of all, recall that from \eqref{estimate:Lemma:1} we have that 
\begin{align*}
    \mathbb{E}\left[\dfrac{1}{\tau} \sum_{k=1}^{\tau} F(\overline{\Theta}^{t,k}) - F(\Theta^T) \bigg|\mathcal{F}^{t,0} \right] \leq & \dfrac{1}{2\eta \tau} \left(\Big\|\overline{\Theta}^{t,0} - \Theta^T\Big\|^2 - \mathbb{E} \left[\Big\|\overline{\Theta}^{t,\tau} - \Theta^T\Big\|^2 \Big| \mathcal{F}^{t,0} \right] \right) 
    \\
    &+ \dfrac{L}{M\tau} \sum_{k=1}^{\tau} \sum_{i=1}^{M} \mathbb{E} \left[\Big\|\Theta_i ^{t,k} - \overline{\Theta}^{t,k}\Big\|^2 \Big| \mathcal{F}^{t,0}\right] +\dfrac{2\Sigma\eta}{M}.
\end{align*}
Moreover, recall that $\overline{\Theta}^{t,\tau}=\overline{\Theta}^{t+1,0}$ and 
\begin{align*}
    \Big\|\overline{\Theta}^{t,0} - \Theta^T\Big\|^2 = \mathbb{E} \left[\Big\|\overline{\Theta}^{t,0} - \Theta^T\Big\|^2 \Big| \mathcal{F}^{t,0} \right],
\end{align*}
so that the above estimate becomes  
\begin{align*}
    \mathbb{E}\left[\dfrac{1}{\tau} \sum_{k=1}^{\tau} F(\overline{\Theta}^{t,k}) - F(\Theta^T) \bigg|\mathcal{F}^{t,0} \right] \leq & \dfrac{1}{2\eta \tau} \mathbb{E} \left[\Big\|\overline{\Theta}^{t,0} - \Theta^T\Big\|^2 - \Big\|\overline{\Theta}^{t+1,0} - \Theta^T\Big\|^2 \Big| \mathcal{F}^{t,0} \right]
    \\
    &+ \dfrac{L}{M\tau} \sum_{k=1}^{\tau} \sum_{i=1}^{M} \mathbb{E} \left[\Big\|\Theta_i ^{t,k} - \overline{\Theta}^{t,k}\Big\|^2 \Big| \mathcal{F}^{t,0}\right] +\dfrac{2\Sigma\eta}{M}.
\end{align*}
Averaging over $t\in\inter{T-1}\cup\{0\}$, we therefore get
\begin{align*}
    \mathbb{E}\Bigg[\dfrac{1}{\tau T} & \sum_{t=0}^{T-1}\sum_{k=1}^{\tau} F(\overline{\Theta}^{t,k}) - F(\Theta^T) \bigg|\mathcal{F}^{t,0} \Bigg] 
    \\
    &\leq \dfrac{1}{2\eta \tau T} \Big\|\overline{\Theta}^{0,0} - \Theta^T\Big\|^2 +\dfrac{2\Sigma\eta}{M} + \dfrac{L}{M\tau T} \sum_{k=1}^{\tau} \sum_{i=1}^{M}\sum_{t=0}^{T-1} \mathbb{E} \left[\Big\|\Theta_i ^{t,k} - \overline{\Theta}^{t,k}\Big\|^2 \Big| \mathcal{F}^{t,0}\right]
    \\
    &= \dfrac{D^2}{2\eta \tau T} +\dfrac{2\Sigma\eta}{M} + \dfrac{L}{M\tau T} \sum_{k=1}^{\tau} \sum_{i=1}^{M}\sum_{t=0}^{T-1} \mathbb{E} \left[\Big\|\Theta_i ^{t,k} - \overline{\Theta}^{t,k}\Big\|^2 \Big| \mathcal{F}^{t,0}\right].
\end{align*}
Now, thanks to \eqref{client:drift:inequality}, we can estimate
\begin{align*}
    \dfrac{L}{M\tau T} \sum_{k=1}^{\tau} \sum_{i=1}^{M}\sum_{t=0}^{T-1} \mathbb{E} \left[\Big\|\Theta_i ^{t,k} - \overline{\Theta}^{t,k}\Big\|^2 \Big| \mathcal{F}^{t,0}\right] \leq 10L\tau^2\eta^2\zeta^2 + 4L\tau\eta^2\Sigma.
\end{align*}
Putting everything together, we therefore have that
\begin{align*}
    \mathbb{E}\left[\dfrac{1}{\tau T} \sum_{t=0}^{T-1}\sum_{k=1}^{\tau} F(\overline{\Theta}^{t,k}) - F(\Theta^T) \bigg|\mathcal{F}^{t,0} \right] \leq \dfrac{D^2}{2\eta \tau T} +\dfrac{2\Sigma\eta}{M} + 10L\tau^2\eta^2\zeta^2 + 4L\tau\eta^2\Sigma.
\end{align*}
\end{proof}
}

\end{document}